\newtheorem{thrm}{Theorem}[section]
\newtheorem{cor}[thrm]{Corollary}
\theoremstyle{definition}
\newtheorem{definition}[thrm]{Definition}
\newtheorem{remark}[thrm]{Remark}
\newtheorem{example}[thrm]{Example}
\newtheorem{Proposition}[thrm]{Proposition}
\numberwithin{equation}{section}
 \newcommand{\NN}{\mathbb{N}}
  \newcommand{\cT}{{\sf{T}}}
 \newcommand{\cN}{{\sf{N}}}
  \newcommand{\cB}{{\sf{B}}}
\newcommand{\bM}{{\overline{M}}}
  \newcommand{\cG}{{\sf{G}}}
  \newcommand{\ck}{{\mathbf{k}}}
  \newcommand{\st}{{\textrm{such that\ }}}
 \newcommand{\kT}{\mathcal{T}}
\newcommand{\kN}{\mathcal{N}}
\newcommand{\kA}{\mathcal{A}}
\newcommand{\kM}{\mathcal{M}}
\newcommand{\kF}{\mathcal{F}}
\newcommand{\kP}{\mathcal{P}}
\author{Michela Ceria}
\address{
Department of Computer Science\\
University of Milan\\
Milan, Italy.}
\email{ michela.ceria@gmail.com}
\thanks{ }
\keywords{ Janet decomposition, Bar Code}
\subjclass{Primary 05E40, Secondary 13P10}
\begin{document}

\title[Applications of Bar Code to involutive divisions]{Applications of Bar Code to involutive divisions  and a greedy algorithm for complete sets.}

\begin{abstract}
In this paper, we describe 
  how to get Janet decomposition for a finite set of terms and detect completeness
  of that set by means of the associated Bar Code. Moreover,   we explain an algorithm to find a variable ordering (if it exists) s.t. a given set of terms is complete according to that ordering. The algorithm is greedy and constructs a Bar Code from the maximal to the minimal variable, adjusting the variable ordering with a sort of backtracking technique, thus allowing to construct the desired ordering without trying all the $n!$ possible orderings.\end{abstract}
\maketitle

\section{Introduction}\label{Intro}
Let  $\mathcal{P}:=\mathbf{k}[x_1,...,x_n]$ be the polynomial ring in 
$n$ variables with coefficients in the field $\ck$. The \emph{semigroup of terms}, generated by  $\{x_1,...,x_n\}$ is:
$\mathcal{T}:=\{x^{\gamma}:=x_1^{\gamma_1}\cdots
x_n^{\gamma_n} \vert \,\gamma:=(\gamma_1,...,\gamma_n)\in \NN^n \}.$
Given a monomial/semigroup ideal $J\subset\mathcal{T}$ and its minimal set of generators  ${\sf G}(J)$ 
  Janet introduced in  \cite{J1} the notion of \emph{multiplicative variables} and the connected decomposition of $J$ into disjoint 
  \emph{cones}, giving a procedure (\emph{completion})   
  to construct such a decomposition.
 In particular, $\forall v\in\mathcal{T}$, there is a \emph{unique} decomposition 
  $v=tu$, with $t \in {\sf  G}(J)$ and
$u$ a  product of powers of $t$'s 
  multiplicative variables.  
While performing reduction w.r.t. an ideal whose initial ideal is $J$, the term $w$ can be reduced by the only polynomial whose leading term generates the cone containing $w$. 
Involutive divisions date back to the works by Janet \cite{J1,J2,J3,J4} who, besides giving a cone decomposition for the monomial ideal $J$, in order to describe Riquier's \cite{Riq} formulation 
of the description for the general solutions of a PDE problem, gave a similar decomposition also for the related \emph{escalier}
${\sf{N}}(J):=\mathcal{T}\setminus J.$ 
 Later in \cite{J2,J3,J4}, he gave a new decomposition (and an algorithm to produce it) which called \emph{involutive} and which is
 behind  both Gerdt-Blinkov \cite{GB1, GB2, GB3} procedure to compute Gr\"obner bases and Seiler's \cite{SeiB} involutiveness theory. His aim was twofold: to reinterpret, in terms of multiplicative 
variables and cone decomposition, Cartan's solution to PDE problems \cite{Car1, Car2, Car3} (whence the name \emph{involutiveness}) and  to re-evaluate within his theory the notion of \emph{generic 
initial ideal} introduced by Delassus \cite{Del1, Del2, Del3} and the correction of his mistake by Robinson \cite{Rob1, Rob2} and Gunther \cite{Gun1, Gun2}, who remark that the notion requires $J$ to 
be  Borel-fixed 
 (an equivalent modern reformulation was proposed by Galligo  \cite{GAL}, who merged Hironaka and Grauert's ideas  \cite{Hir,Gra}; see also \cite{GS,Ei}). Janet remarked that all Borel-fixed ideals 
are involutive, but the converse is false. More precisely, in \cite{J3} Janet presents, as \emph{nouvelle formes cannoniques}, Delassus, Robinson and Gunther's results and compares them with the one 
deductible from an involutive basis and in  \cite[p.62]{J4}, given a homogeneous  ideal $\mathcal{I} \triangleleft \mathcal{P}$ in generic coordinates,  he restates Riquier's completion in terms of a 
Macaulay-like construction, iteratively computing the vector spaces $\mathcal{I}_d \!:=\!\{f\in\mathcal{I} \!:\! \deg(f) = d\}$ until Cartan test grants that Castelnuovo-Mumford \cite[pg.99]{Mum} 
regularity $D$ has been reached. This would allow him to consider the  semigroup ideal ${\sf T}(\mathcal{I})$ of the leading terms w.r.t.  deg-lex 
(in the sense of Gr\"obner basis theory) and get the \emph{involutive reduction} required by Riquier's procedure.
 The formal definition of involutive division is due to Gerdt-Blinkov \cite{GB1, GB2}.
\\
Bar Codes, introduced in \cite{Ce,BCStr}, are a visual representation
for finite sets of terms $M\subset \kT$. In particular, if $M=\cN(I)$
is the Groebner escalier of a zerodimensional ideal $I\triangleleft \kP$, many of its properties can be directly deduced by its Bar Code.
As an example, in \cite{CMCeMu}, Bar Codes are employed to develop a combinatorial algorithm which, given a finite set
of distinct points, computes the lexicographical Groebner escalier of its vanishing ideal.
This algorithm is an alternative to those by Cerlienco-Mureddu \cite{CeMu, CeMu2, CeMu3} and by Felszeghy-R\'{a}th-R\'{o}nyay \cite{FRR}, which keeps the former algorithm's iterativity, though reaching a complexity which is near to that of the latter one. 
In \cite{Ce}, we use Bar Codes as tools to define a bijection between zerodimensional (strongly) stable ideals in two or three variables and some partitions of their (constant) affine Hilbert polynomial.
\\
Now, we are focusing on the properties of Bar Codes connected to involutive divisions.
Bar Codes are a good technology to study involutive divisions.
For example, it is trivial to compute the Pommaret \cite{J2} basis of $I$ from the Bar Code.
In \cite{NotaBasi}, we exploit the Bar Code to compute by Moeller interpolation the Pommaret basis of the ideal of a finite set of distinct points.  For a general overview of Bar Codes' applications see \cite{BCStr}.
\\
In this paper, we discuss some applications of the Bar Code to involutive divisions. In particular, we see how 
the Bar Code associated to a finite set of terms, which is non-necessarily an order ideal, allows to approach Janet decomposition \cite{J1} and decide whether that set is complete according to Janet's definition.
Moreover, we give an algorithm to check whether there is a variables' ordering s.t. a given set $M\subset \kT$
is complete.  We need to remark that such a topic has some connections to the study of Stanley decompositions and Stanley depth. 
Indeed, Janet decomposition for a complete set is exactly a Stanley decomposition which can be easily read off from that set. 
Anyway, has stated by Herzog \cite{HS},
\begin{quote}
{\em Janet decompositions from the viewpoint of Stanley depth
are not optimal. They rarely give Stanley decompositions providing the Stanley depth of a monomial ideal. However one obtains the result that the Stanley depth of a monomial ideal is at least 1.}
\end{quote}
and, actually, this paper places itself in the field of study mainly developed by Gerdt-Blinkov \cite{GB1,GB2,GB3} and Seiler \cite{SeiB}, which has aims and language that are different from those of Stanley depth.

\noindent After the next section, devoted to notation, we describe the Bar Code (section \ref{BCsect}),
as the fundamental tool for the following sections.
Then, in section \ref{Jdecsect}, we describe Janet decomposition into multiplicative/non-multiplicative variables
 and we explain how to use the Bar Code to get it from a finite set of terms. Moreover, we also deal with complete sets,
 explaining how also completeness can be read from a suitable Bar Code.
 In section \ref{greedysect}, then, we explain an algorithm to detect a variable ordering (if it exists) 
 s.t. a given set of terms is complete according to that ordering. The algorithm is greedy and constructs
 a Bar Code from the maximal to the minimal variable, adjusting the variable ordering with a sort of backtracking technique, and 
 allowing to construct the desired ordering without trying all the $n!$ possible orderings.

\section{Some general notation}\label{Notat}
Throughout this paper we mainly follow the notation of \cite{SPES}. We denote by $\mathcal{P}:=\mathbf{k}[x_1,...,x_n]$ the ring of polynomials in
$n$ variables with coefficients in the field $\ck$. The \emph{semigroup of terms}, generated by the set $\{x_1,...,x_n\}$ is:
$$\mathcal{T}:=\{x^{\gamma}:=x_1^{\gamma_1}\cdots
x_n^{\gamma_n} \vert \,\gamma:=(\gamma_1,...,\gamma_n)\in \NN^n \}.$$ If $\alpha\subseteq \{1,...,n\}$ then $\kT[\alpha]:=\{x^{\gamma}:=x_1^{\gamma_1}\cdots
x_n^{\gamma_n} \in \kT \vert \,\gamma_i \neq 0 \Rightarrow i \in \alpha\}.$ If $t=x_1^{\gamma_1}\cdots x_n^{\gamma_n}$, then $\deg(t)=\sum_{i=1}^n
\gamma_i$ is the \emph{degree} of $t$ and, for each $h\in \{1,...,n\}$
$\deg_h(t):=\gamma_h$ is the $h$-\emph{degree} of $t$.
 A \emph{semigroup ordering} $<$ on $\mathcal{T}$  is  a total ordering
\st $ t_1<t_2 \Rightarrow st_1<st_2,\, \forall s,t_1,t_2
\in \mathcal{T}.$ For each semigroup ordering $<$ on $\mathcal{T}$,  we can represent a polynomial
$f\in \mathcal{P}$ as a linear combination of terms arranged w.r.t. $<$, with
coefficients in the base field $\mathbf{k}$:
$$f=\sum_{t \in \mathcal{T}}c(f,t)t=\sum_{i=1}^s c(f,t_i)t_i:\,
c(f,t_i)\in
\mathbf{k}\setminus \{0\},\, t_i\in \mathcal{T},\, t_1>...>t_s,$$ with
$\cT(f):=t_1$   the 
\emph{leading term} of $f$, $Lc(f):=c(f,t_1)$ the  \emph{leading
coefficient} 
of $f$ and $tail(f):=f-c(f,\cT(f))\cT(f)$  the 
\emph{tail} of $f$.
A \emph{term ordering} is a semigroup ordering \st $1$ is lower 
than every variable or, equivalently, it is a \emph{well ordering}.
In all paper, we consider the \emph{lexicographical ordering} 
induced
by  $x_1<...<x_n$, i.e:
$ x_1^{\gamma_1}\cdots x_n^{\gamma_n}<_{Lex} x_1^{\delta_1}\cdots
x_n^{\delta_n} \Leftrightarrow \exists j\, \vert  \,
\gamma_j<\delta_j,\,\gamma_i=\delta_i,\, \forall i>j, $
which is a term ordering. Since we do not consider any 
term ordering other than Lex, we drop the subscript and denote it by $<$ 
instead of $<_{Lex}$.\\
A subset $J \subseteq \kT$ is a \emph{semigroup ideal} if  $t \in J 
\Rightarrow st \in J,\, \forall s \in \mathcal{T}$; a subset ${\sf N}\subseteq \mathcal{T}$ is an \emph{order ideal} if
$t\in {\sf N} \Rightarrow s \in {\sf N}\, \forall s \vert t$.  
We have that ${\sf N}\subseteq \mathcal{T}$ is an order ideal if and only if 
$\mathcal{T}\setminus {\sf N}=J$ is a semigroup ideal.
Given a semigroup ideal $J\subset\mathcal{T}$  we define ${\sf 
N}(J):=\mathcal{T}\setminus J$. The minimal set of generators ${\sf G}(J)$ of $J$ is called the \emph{monomial basis} 
of $J$.
 For all subsets $G \subset \mathcal{P}$,  $\cT\{G\}:=\{\cT(g),\, g \in  G\}$ and $\cT(G)$ is the semigroup ideal
of leading terms defined as $\cT(G):=\{t \cT(g),\, t \in \mathcal{T}, g \in G\}$. Fixed a term order $<$, for any ideal  $I
\triangleleft \mathcal{P}$ the monomial basis of the semigroup ideal 
$\cT(I)=\cT\{I\}$ is called \emph{monomial basis}  of $I$ and denoted again by $\cG(I)$,
whereas the ideal 
$In(I):=(\cT(I))$ is called \emph{initial ideal} and the order ideal 
$\cN(I):=\kT \setminus \cT(I)$ is called \emph{Groebner escalier} of $I$.

\section{Bar Code for monomial ideals}\label{BCsect}
In this section, referring to \cite{Ce, BCStr}, we summarize the main definitions and properties about Bar Codes, which will be used in what follows. First of all, we recall the general definition of  Bar Code. 
\begin{definition}\label{BCdef1}
A Bar Code $\cB$ is a picture composed by segments, called \emph{bars}, 
superimposed in horizontal rows, which satisfies conditions $a.,b.$ below.
Denote by 
\begin{itemize}
 \item $\cB_j^{(i)}$ the $j$-th bar (from left to right) of the $i$-th row 
 (from top to bottom), $1\leq i \leq n$, i.e. the \emph{$j$-th $i$-bar};
 \item $\mu(i)$ the number of bars of the $i$-th row
 \item $l_1(\cB_j^{(1)}):=1$, $\forall j \in \{1,2,...,\mu(1)\}$ the $(1-)$\emph{length} of the $1$-bars;
 \item $l_i(\cB_j^{(k)})$, $2\leq k \leq n$, $1 \leq i \leq k-1$, $1\leq j \leq \mu(k)$ the $i$-\emph{length} of $\cB_j^{(k)}$, i.e. the number of $i$-bars lying over $\cB_j^{(k)}$
\end{itemize}
\begin{itemize}
 \item[a.] $\forall i,j$, $1 \leq i \leq n-1$, $1\leq j \leq \mu(i)$, $\exists ! \overline{j}\in \{1,...,\mu(i+1)\}$ s.t. $\cB_{\overline{j}}^{(i+1)}$ lies  under  $\cB_j^{(i)}$ 
 \item[b.] $\forall i_1,\,i_2 \in \{1,...,n\}$, $\sum_{j_1=1}^{\mu(i_1)} l_1(\cB_{j_1}^{(i_1)})= \sum_{j_2=1}^{\mu(i_2)} l_1(\cB_{j_2}^{(i_2)})$; we will then say that  \emph{all the rows have the 
same length}.
\end{itemize}
\end{definition}
\begin{example}\label{BC1}
 An example of Bar Code $\cB$ is
\\
\begin{minipage}{5cm}
 \begin{center}
\begin{tikzpicture}
\node at (3.8,-0.5) [] {${\scriptscriptstyle 1}$};
\node at (3.8,-1) [] {${\scriptscriptstyle 2}$};
\node at (3.8,-1.5) [] {${\scriptscriptstyle 3}$};

\draw [thick] (4,-0.5) --(4.5,-0.5);
\draw [thick] (5,-0.5) --(5.5,-0.5);
\draw [thick] (6,-0.5) --(6.5,-0.5);
\draw [thick] (7,-0.5) --(7.5,-0.5);
\draw [thick] (8,-0.5) --(8.5,-0.5);
\draw [thick] (4,-1)--(5.5,-1);
\draw [thick] (6,-1) --(6.5,-1);
\draw [thick] (7,-1) --(7.5,-1);
\draw [thick] (8,-1) --(8.5,-1);
\draw [thick] (4,-1.5)--(5.5,-1.5);
\draw [thick] (6,-1.5) --(8.5,-1.5);
\end{tikzpicture}
\end{center}
 
\end{minipage}
\hspace{0.4cm}
\begin{minipage}{7cm}
 $\quad$ \\
The $1$-bars have length $1$.   As regards the other rows, $l_1(\cB_1^{(2)})=2$,
$l_1(\cB_2^{(2)})=l_1(\cB_3^{(2)})= l_1(\cB_4^{(2)})=1$,
$l_2(\cB_1^{(3)})=1$,$l_1(\cB_1^{(3)})=2$ and
\end{minipage}
\\
 $l_2(\cB_2^{(3)})=l_1(\cB_2^{(3)})=3$, so
 $\sum_{j_1=1}^{\mu(1)} l_1(\cB_{j_1}^{(1)})= \sum_{j_2=1}^{\mu(2)}
l_1(\cB_{j_2}^{(2)})= \sum_{j_3=1}^{\mu(3)} l_1(\cB_{j_3}^{(3)})=5.$
\end{example}

\noindent We outline now the construction of the Bar Code associated to a finite set 
of terms. For more details, see \cite{BCStr}, while for an alternative construction, see \cite{Ce}.\\
First of all, given a  term  $t=x_1^{\gamma_1}\cdots 
x_n^{\gamma_n} \in \mathcal{T}\subset \ck[x_1,...,x_n]$, for each $i \in \{1,...,n\}$, we take
$\pi^i(t):=x_i^{\gamma_i}\cdots x_n^{\gamma_n} \in \mathcal{T}.$ 
Taken a finite set of terms $M\subset \mathcal{T}$, for each $ i \in \{1,...,n\}$, we then define 
$M^{[i]}:=\pi^i(M):=\{\pi^i(t) \vert t \in M\}.$ \\
Now we take $M\subseteq \mathcal{T}$, with $\vert M\vert =m < \infty$ and we order its 
elements increasingly  w.r.t. Lex, getting the list  
$\bM=[t_1,...,t_m]$. Then, we construct the sets $M^{[i]}$, and 
the corresponding lexicographically ordered lists\footnote{$\bM$ cannot contain repeated terms, while the $\bM^{[i]}$, for $1<i \leq n$, can. In case some repeated terms occur in $\bM^{[i]}$, $1<i 
\leq n$, they clearly have to be adjacent in the list, due to the 
lexicographical ordering.} $\bM^{[i]}$, for $i=1,...,n$.

\noindent We define the $n\times m $ matrix of terms $\kM$   s.t. 
its $i$-th row is $\bM^{[i]}$, $i=1,...,n$, i.e.
\[\kM:= \left(\begin{array}{cccc}
\pi^{1}(t_1)&... & \pi^{1}(t_m)\\
\pi^{2}(t_1)&... & \pi^{2}(t_m)\\
\vdots & \quad &\vdots\\
\pi^{n}(t_1)& ... & \pi^{n}(t_m)
\end{array}\right)\]
\begin{definition}\label{BarCodeDiag}
 The \emph{Bar Code diagram} $\cB$ associated to $M$ (or, equivalently, to 
$\bM$) is a 
$n\times m $ diagram, made by segments s.t. the $i$-th row of $\cB$, $1\leq 
i\leq n$  is constructed as follows:
       \begin{enumerate}
        \item take the $i$-th row of $\kM$, i.e. $\bM^{[i]}$
        \item consider all the sublists of repeated terms, i.e. $[\pi^i(t_{j_1}),\pi^i(t_{j_1 +1}),
        ...,\pi^i(t_{j_1 
+h})]$ s.t. 
        $\pi^i(t_{j_1})= \pi^i(t_{j_1 
+1})=...=\pi^i(t_{j_1 +h})$, noting that\footnote{Clearly if a term 
$\pi^i(t_{\overline{j}})$ is not 
        repeated in $\bM^{[i]}$, the sublist containing it will be only  
$[\pi_i(t_{\overline{j}})]$, i.e. $h=0$.} $0 \leq h<m$ 
        \item underline each sublist with a segment
        \item delete the terms of $\bM^{[i]}$, leaving only the segments (i.e. 
the \emph{$i$-bars}).
       \end{enumerate}
 We usually label each $1$-bar $\cB_j^{(1)}$, $j \in \{1,...,\mu(1)\}$ with the 
term $t_j \in \bM$.
\end{definition}

\noindent A Bar Code diagram is a 
Bar Code in the sense of Definition \ref{BCdef1}.

\begin{example}\label{BarCodeNoOrdId}
Given  $M=\{x_1,x_1^2,x_2x_3,x_1x_2^2x_3,x_2^3x_3\}\subset
\mathbf{k}[x_1,x_2,x_3]$, we have: the $3 \times 5 $ table on the 
left and then to the 
Bar Code on the right:
\\
\begin{minipage}[b]{0.5\linewidth}
\begin{center}
\begin{tikzpicture}

\node at (4.2,-0.5) [] {${\small x_1}$};
\node at (5.2,-0.5) [] {${\small x_1^2}$};
\node at (6.2,-0.5) [] {${\small x_2x_3}$};
\node at (7.4,-0.5) [] {${\small x_1x_2^2x_3}$};
\node at (8.6,-0.5) [] {${\small x_2^3x_3}$};

\node at (4.2,-1) [] {${\small 1}$};
\node at (5.2,-1) [] {${\small 1}$};
\node at (6.2,-1) [] {${\small x_2x_3}$};
\node at (7.4,-1) [] {${\small x_2^2x_3}$};
\node at (8.6,-1) [] {${\small x_2^3x_3}$};

\node at (4.2,-1.5) [] {${\small 1}$};
\node at (5.2,-1.5) [] {${\small1}$};
\node at (6.2,-1.5) [] {${\small x_3}$};
\node at (7.4,-1.5) [] {${\small x_3}$};
\node at (8.6,-1.5) [] {${\small x_3}$};
\end{tikzpicture}
\end{center}
\end{minipage}
\hspace{0.45cm}
\begin{minipage}[b]{0.5\linewidth}
\begin{center}
\begin{tikzpicture}
\node at (4.2,0) [] {${\small x_1}$};
\node at (5.2,0) [] {${\small x_1^2}$};
\node at (6.2,0) [] {${\small x_2x_3}$};
\node at (7.2,0) [] {${\small x_1x_2^2x_3}$};
\node at (8.2,0) [] {${\small x_2^3x_3}$};

\node at (3.8,-0.5) [] {${\scriptscriptstyle 1}$};
\node at (3.8,-1) [] {${\scriptscriptstyle 2}$};
\node at (3.8,-1.5) [] {${\scriptscriptstyle 3}$};

\draw [thick] (4,-0.5) --(4.5,-0.5);
\draw [thick] (5,-0.5) --(5.5,-0.5);
\draw [thick] (6,-0.5) --(6.5,-0.5);
\draw [thick] (7,-0.5) --(7.5,-0.5);
\draw [thick] (8,-0.5) --(8.5,-0.5);
\draw [thick] (4,-1)--(5.5,-1);
\draw [thick] (6,-1) --(6.5,-1);
\draw [thick] (7,-1) --(7.5,-1);
\draw [thick] (8,-1) --(8.5,-1);
\draw [thick] (4,-1.5)--(5.5,-1.5);
\draw [thick] (6,-1.5) --(8.5,-1.5);
\end{tikzpicture}
\end{center}
\end{minipage}
\end{example} 

Now we recall the vice versa, i.e. how to associate a finite set of terms $M_\cB$ to a given Bar 
Code $\cB$. In \cite{Ce} we first give a more general procedure to do so and then we specialize it in order 
 to have a \emph{unique} set of terms for each Bar Code. Here we give only the specialized version, so 
 we follow the steps below:
 \begin{itemize}
 \item[$\mathfrak{B}1$] consider the $n$-th row, composed by the bars 
$B^{(n)}_1,...,B^{(n)}_{\mu(n)}$. Let $l_1(B^{(n)}_j)=\ell^{(n)}_j$, 
for 
$j\in\{1,...,\mu(n)\}$. Label each bar 
$B^{(n)}_j$ with $\ell^{(n)}_j$ copies 
of $x_n^{j-1}$.
 \item[$\mathfrak{B}2$] For each $i=1,...,n-1$, $1 \leq j \leq \mu(n-i+1)$ 
 consider the bar $B^{(n-i+1)}_j$ and suppose that it has been 
 labelled by 
$\ell^{(n-i+1)}_j$ copies of a term $t$. Consider all the $(n-i)$-bars 
$B^{(n-i)}_{\overline{j}},...,B^{(n-i)}_{\overline{j}+h}$ 
  lying immediately  above  $ B^{(n-i+1)}_j$; note that $h$ satisfies 
$0\leq h\leq \mu(n-i)-\overline{j}$. 
 Denote the 1-lengths of 
$B^{(n-i)}_{\overline{j}},...,B^{(n-i)}_{\overline{j}+h}$  by  
$l_1(B^{(n-i)}_{\overline{j}})=\ell^{(n-i)}_{\overline{j}}$,...,
 $l_1(B^{(n-i)}_{\overline{j}+h})=\ell^{(n-i)}_{\overline{j}+h}$. 
 For each $0\leq k\leq h$, label  $ B^{(n-i)}_{\overline{j}+k}$ with 
$\ell^{(n-i)}_{\overline{j}+k}$ copies of $t x_{n-i}^{k}$. 
 \end{itemize}
\begin{definition}\label{Admiss}
A Bar Code $\cB$ is \emph{admissible} if the set $M$ obtained by applying 
$\mathfrak{B}1$ and $\mathfrak{B}2$ to $\cB$  is an order ideal.
\end{definition}
\noindent By definition of order 
ideal, using $\mathfrak{B}1$ and $\mathfrak{B}2$ is the only way an order 
ideal can be associated to an admissible Bar Code. 
 
\begin{definition}\label{elist}
 Given a Bar Code $\cB$, 
 let us consider a $1$-bar $B_{j_1}^{(1)}$, with $j_1 
\in \{1,...,\mu(1)\}$.
 The \emph{e-list} associated to $B_{j_1}^{(1)}$ is the $n$-tuple 
$e(B_{j_1}^{(1)}):=(b_{j_1,n},....,b_{j_1,1})$, defined as follows:
 \begin{itemize}
  \item consider the $n$-bar  $B_{j_n}^{(n)}$, lying under 
  $B_{j_1}^{(1)}$. 
The number of $n$-bars on the left of $B_{j_n}^{(n)}$ is  $b_{j_1,n}.$
  \item for each $i=1,...,n-1$, let  $B_{j_{n-i+1}}^{(n-i+1)}$ and 
$B_{j_{n-i}}^{(n-i)}$ be 
  the $(n-i+1)$-bar and the $(n-i)$-bar 
lying under $B_{j_1}^{(1)}$. Consider the $(n-i+1)$-block associated to 
$B_{j_{n-i+1}}^{(n-i+1)}$, i.e. $B_{j_{n-i+1}}^{(n-i+1)}$ and all the bars lying over it. 
The number of $(n-i)$-bars of 
the block, which lie on  the 
left of $B_{j_{n-i}}^{(n-i)}$ is $b_{j_1,n-i}.$
    \end{itemize}
\end{definition}

\begin{remark}\label{ElistExp}
 Given a Bar Code $\cB$, fix a $1$-bar  $B_{j}^{(1)}$, with $j \in 
\{1,...,\mu(1)\}$.\\
 Comparing Definition \ref{elist} and the steps $\mathfrak{B}1$ and 
 $\mathfrak{B}2$ described above, we can observe that the values of the e-list 
$e(B_j^{(1)}):=(b_{j,n},....,b_{j,1})$ are exactly the
exponents of the term 
labelling $B_{j}^{(1)}$, obtained applying $\mathfrak{B}1$ and $\mathfrak{B}2$ to
$\cB$ (compare Example \ref{BarCodeNoOrdId}).
\end{remark}

\begin{Proposition}[Admissibility criterion]\label{AdmCrit}
 A Bar Code $\cB$ is admissible if and only if, for each 
 $1$-bar $\cB_{j}^{(1)}$, $j \in \{1,...,\mu(1)\}$, the e-list 
$e(\cB_j^{(1)})=(b_{j,n},....,b_{j,1})$ satisfies the following condition: 
$\forall k \in \{1,...,n\} \textrm{ s.t. } b_{j,k}>0,\, \exists \overline{j} 
 \in \{1,...,\mu(1)\}\setminus \{j\} \textrm{ s.t. } $ $$
e(\cB_{\overline{j}}^{(1)})= (b_{j,n},...,b_{j,k+1}, (b_{j,k})-1, 
b_{j,k-1},...,b_{j,1}). $$
\qed
\end{Proposition}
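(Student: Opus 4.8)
The plan is to prove both implications by directly comparing the e-list of each $1$-bar with the exponent vector of the term that labels it, using Remark \ref{ElistExp}. That remark tells us that if $M = M_\cB$ is the set produced by $\mathfrak{B}1$ and $\mathfrak{B}2$, then the term attached to $\cB_j^{(1)}$ is exactly $x^{e(\cB_j^{(1)})} = x_n^{b_{j,n}} x_{n-1}^{b_{j,n-1}} \cdots x_1^{b_{j,1}}$, and the map $j \mapsto x^{e(\cB_j^{(1)})}$ is a bijection between $\{1,\dots,\mu(1)\}$ and $M$. So the content of the proposition is: $M$ is an order ideal if and only if for every element $v = x^{e(\cB_j^{(1)})} \in M$ and every index $k$ with $b_{j,k} > 0$, the term $v/x_k$ also lies in $M$.

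First I would recall the standard fact that a finite set $M \subseteq \kT$ is an order ideal (is closed under taking divisors) if and only if it is closed under the ``one–step'' divisions $v \mapsto v/x_k$ for each variable $x_k$ actually dividing $v$; this follows because any proper divisor of $v$ is reached from $v$ by a finite chain of such single-variable steps, and conversely $v/x_k \mid v$. Granting this, for the forward direction I would assume $\cB$ admissible, fix a $1$-bar $\cB_j^{(1)}$ with associated term $v = x^{e(\cB_j^{(1)})}$ and an index $k$ with $b_{j,k} > 0$. Then $w := v/x_k = x_n^{b_{j,n}}\cdots x_k^{b_{j,k}-1}\cdots x_1^{b_{j,1}}$ is a divisor of $v$, hence $w \in M = M_\cB$ by admissibility; since $w \in M$ and the labelling is a bijection, there is a unique $\overline{j} \in \{1,\dots,\mu(1)\}$ with $x^{e(\cB_{\overline{j}}^{(1)})} = w$, and reading off the exponents of $w$ gives precisely $e(\cB_{\overline{j}}^{(1)}) = (b_{j,n},\dots,b_{j,k+1},b_{j,k}-1,b_{j,k-1},\dots,b_{j,1})$, with $\overline{j} \neq j$ because the exponent vectors differ. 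For the converse, assume the e-list condition; I would show $M_\cB$ is closed under the one–step divisions. Take $v \in M_\cB$; by Remark \ref{ElistExp} it equals $x^{e(\cB_j^{(1)})}$ for some $j$, and if $x_k \mid v$ then $b_{j,k} > 0$, so the hypothesis yields $\overline{j}$ whose term is exactly $v/x_k$; thus $v/x_k \in M_\cB$. By the closure fact, $M_\cB$ is an order ideal, i.e. $\cB$ is admissible.

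The step I expect to require the most care is not the combinatorics of the e-list itself but making the link between Definition \ref{elist} and the labels precise: I am relying on Remark \ref{ElistExp} for the identity ``e-list $=$ exponent vector of the $\mathfrak{B}1$/$\mathfrak{B}2$ label'', and on the implicit fact (from the construction, since each $1$-bar gets a single label and by property (a) of Bar Codes the $n$ rows are nested) that distinct $1$-bars receive distinct terms, so that $j \mapsto x^{e(\cB_j^{(1)})}$ is a bijection onto $M_\cB$. If one wants the proof fully self-contained, one would have to spell out that the only way a divisor $w$ of a labelled term can occur in $M_\cB$ is as the label of some $1$-bar — but this is exactly the observation recorded right after Definition \ref{Admiss} (``using $\mathfrak{B}1$ and $\mathfrak{B}2$ is the only way an order ideal can be associated to an admissible Bar Code''). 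With these ingredients in hand the argument is essentially a bookkeeping translation, so I would keep the write-up short and lean on the cited remark.
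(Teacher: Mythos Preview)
Your proposal is correct. The paper itself does not supply a proof here: the proposition is stated with a bare \qed, as it is imported from the references \cite{Ce,BCStr} where Bar Codes are developed in detail. Your argument---translating the e-list condition into the one-step divisor closure of $M_\cB$ via Remark~\ref{ElistExp}, and using that a finite $M\subseteq\kT$ is an order ideal iff it is closed under $v\mapsto v/x_k$---is exactly the intended reading of the statement and would be the standard way to fill in the omitted proof. The only point you flag as delicate, injectivity of $j\mapsto e(\cB_j^{(1)})$, is indeed immediate from Definition~\ref{elist}: two distinct $1$-bars differ at the first level $i$ where they no longer lie over the same $i$-bar, and at that level their $b_{\cdot,i}$ entries differ.
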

\noindent Consider the  sets
$\kA_n:=\{\cB \in \mathcal{B}_n \textrm{ s.t. } \cB  \textrm{ admissible}\} $ and 
$\kN_n:=\{\cN \subset \kT,\, \vert \cN\vert < \infty \textrm{ s.t. } \cN 
\textrm{ is an order ideal}\}.$ We can define the map 
$\eta: \kA_n \rightarrow \kN_n ; \quad \cB \mapsto \cN,$
where $\cN$ is the order ideal obtained applying $\mathfrak{B}1$ and $\mathfrak{B}2$ to $\cB$,
and it can be easily proved that $\eta$ is a bijection.\\
Up to this point, we have discussed the link between Bar Codes and order ideals,
 i.e. we focused on the link between Bar Codes and Groebner escaliers of 
monomial ideals. We show now that, given a Bar Code 
$\cB$ and the order ideal  $\cN =\eta(\cB)$
it is possible to deduce a very specific generating set 
for the monomial ideal $I$ s.t. $\cN(I)=\cN$.
    \begin{definition}\label{StarSet}
 The \emph{star set} of an order ideal $\cN$ and 
 of its associated Bar Code $\cB=\eta^{-1}(\cN)$ is a set $\kF_\cN$ constructed as 
follows:
 \begin{itemize}
  \item[a)] $\forall 1 \leq i\leq n$, let $t_i$ be a term 
  which labels a $1$-bar lying over $\cB^{(i)}_{\mu(i)}$, 
  then 
  $x_i\pi^i(t_i)\in \kF_\cN$;
  \item[b)] $\forall 1 \leq i\leq n-1$, 
  $\forall 1 \leq j \leq \mu(i)-1$ let 
  $\cB^{(i)}_j$ and $\cB^{(i)}_{j+1}$ be two 
  consecutive bars not lying over the 
same $(i+1)$-bar and let $t^{(i)}_j$ be a term
which labels a $1$-bar lying 
over   $\cB^{(i)}_j$, then 
  $x_i\pi^i(t^{(i)}_j)\in \kF_\cN$.
 \end{itemize}
\end{definition}
\noindent We usually represent $\kF_\cN$ within 
the associated Bar Code $\cB$, inserting
each $t \in \kF_\cN$ on the right of the bar from which 
it is deduced.
Reading the terms from left to right and from the top to
the bottom, $\kF_\cN$ 
is ordered w.r.t. Lex. 
\begin{example}\label{BCP}
$\quad $\\
\begin{minipage}{6.5cm}
For ${\sf
N}=\{1,x_1,x_2,x_3\}\subset
\mathbf{k}[x_1,x_2,x_3]$,  
we have $\kF_\cN=\{x_1^2,x_1x_2,x_2^2,x_1x_3,x_2x_3,x_3^2\}$; looking at 
Definition \ref{StarSet}, we can see that  the terms $x_1x_3,x_2x_3,x_3^2$ come 
from a), while the terms  
$x_1^2,x_1x_2,x_2^2$ come from b).

\end{minipage}
\hspace{0.5 cm}
\begin{minipage}{5cm}
  \begin{center}
\begin{tikzpicture}[scale=0.4]
\node at (-0.5,4) [] {${\scriptscriptstyle 0}$};
\node at (-0.5,0) [] {${\scriptscriptstyle 3}$};
\node at (-0.5,1.5) [] {${\scriptscriptstyle 2}$};
\node at (-0.5,3) [] {${\scriptscriptstyle 1}$};
 \draw [thick] (0,0) -- (7.9,0);
 \draw [thick] (9,0) -- (10.9,0);
 \node at (11.5,0) [] {${\scriptscriptstyle
x_3^2}$};
 \draw [thick] (0,1.5) -- (4.9,1.5);
 \draw [thick] (6,1.5) -- (7.9,1.5);
 \node at (8.5,1.5) [] {${\scriptscriptstyle
x_2^2}$};
 \draw [thick] (9,1.5) -- (10.9,1.5);
 \node at (11.5,1.5) [] {${\scriptscriptstyle
x_2x_3}$};
 \draw [thick] (0,3.0) -- (1.9,3.0);
 \draw [thick] (3,3.0) -- (4.9,3.0);
 \node at (5.5,3.0) [] {${\scriptscriptstyle
x_1^2}$};
 \draw [thick] (6,3.0) -- (7.9,3.0);
 \node at (8.5,3.0) [] {${\scriptscriptstyle
x_1x_2}$};
 \draw [thick] (9,3.0) -- (10.9,3.0);
 \node at (11.5,3.0) [] {${\scriptscriptstyle
x_1x_3}$};
 \node at (1,4.0) [] {\small $1$};
 \node at (4,4.0) [] {\small $x_1$};
 \node at (7,4.0) [] {\small $x_2$};
 \node at (10,4.0) [] {\small $x_3$};
\end{tikzpicture}
\end{center}
\end{minipage}

\end{example}

\noindent In \cite{CMR}, given a monomial ideal $I$, the authors define
 the following set, calling it \emph{star set}:
$\mathcal{F}(I)=\left\{x^{\gamma} \in \mathcal{T}\setminus {\sf N}(I) \,
\left\vert \,
\frac{x^{\gamma}}{\min(x^{\gamma})} \right. \in {\sf N}(I) \right\}.$
\begin{Proposition}[\cite{Ce}]\label{DefSt}
With the above notation $\mathcal{F}_{\sf N}=\mathcal{F}(I)$.
\end{Proposition}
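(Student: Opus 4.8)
The plan is to prove the two inclusions $\mathcal{F}_{\sf N}\subseteq \mathcal{F}(I)$ and $\mathcal{F}(I)\subseteq\mathcal{F}_{\sf N}$, where $I$ is the monomial ideal with $\cN(I)=\cN$, $\cB=\eta^{-1}(\cN)$, and $\min(x^\gamma)$ denotes the variable $x_i$ with $i=\min\{h:\gamma_h>0\}$ (the Lex-least variable dividing $x^\gamma$). First I would record the two combinatorial facts the argument rests on. (i) Applying $\pi^i$ to a Lex-increasing list yields a Lex-nondecreasing one, so the $i$-bars of $\cB$, read left to right, carry strictly increasing values $\pi^i(t)$, $t\in\cN$; in particular the value of $\cB^{(i)}_{\mu(i)}$ is the Lex-maximum of $\{\pi^i(t):t\in\cN\}$, and the value of the $i$-bar over which a $1$-bar labelled $w$ lies is exactly $\pi^i(w)$. (ii) A ``staircase'' lemma, to be stated and proved separately: fixing $i\le n$ and a term $s\in\kT[\{i+1,\dots,n\}]$ of the form $\pi^{i+1}(w)$ with $w\in\cN$ (with $s=1$ when $i=n$), and setting $e_s:=\max\{\deg_i(w):w\in\cN,\ \pi^{i+1}(w)=s\}$, one has $x_i^k s\in\cN$ for $0\le k\le e_s$ and $x_i^{e_s+1}s\notin\cN$, whence the $i$-bars lying over the $(i+1)$-bar of value $s$ are exactly those of values $s,x_is,\dots,x_i^{e_s}s$, in this order. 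Step (ii) is where the order-ideal hypothesis on $\cN$ is used — it gives the downward closure of the admissible $x_i$-exponents over a fixed tail, and it can alternatively be deduced from the admissibility criterion (Proposition \ref{AdmCrit}) — and I expect it to be the only genuinely non-bookkeeping point. Finally, note that every term produced by Definition \ref{StarSet} has the shape $x_i\pi^i(t')$ with $t'\in\cN$; since $\pi^i(t')\in\kT[\{i,\dots,n\}]$, we get $\min\bigl(x_i\pi^i(t')\bigr)=x_i$, and $x_i\pi^i(t')/x_i=\pi^i(t')$ divides $t'\in\cN$, hence lies in $\cN$ because $\cN$ is an order ideal. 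So for such a term, membership in $\mathcal{F}(I)$ is equivalent to $x_i\pi^i(t')\notin\cN$.

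For $\mathcal{F}_{\sf N}\subseteq\mathcal{F}(I)$ I would take $t=x_i\pi^i(t')\in\mathcal{F}_{\sf N}$ and check only that $t\notin\cN$. If $t$ arises from case a) of Definition \ref{StarSet}, then $t'$ labels a $1$-bar over $\cB^{(i)}_{\mu(i)}$, so by (i) its value $\pi^i(t')$ is the Lex-maximum of $\{\pi^i(w):w\in\cN\}$; but $\pi^i(t)=t=x_i\pi^i(t')$ is strictly Lex-larger than $\pi^i(t')$, so $t$ is not a $\pi^i$-value of $\cN$, hence $t\notin\cN$. If $t$ arises from case b), put $s:=\pi^{i+1}(t')$; the bar $\cB^{(i)}_j$ carries value $\pi^i(t')=x_i^{\deg_i(t')}s$ and, being the last $i$-bar over its $(i+1)$-bar (because $\cB^{(i)}_j$ and $\cB^{(i)}_{j+1}$ lie over distinct $(i+1)$-bars), it carries value $x_i^{e_s}s$ by (ii); hence $\deg_i(t')=e_s$ and $t=x_i^{e_s+1}s\notin\cN$ by (ii). In both cases $t\in\mathcal{F}(I)$.

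For $\mathcal{F}(I)\subseteq\mathcal{F}_{\sf N}$ I would take $x^\gamma\in\mathcal{F}(I)$, set $x_i=\min(x^\gamma)$ (so $\gamma_1=\dots=\gamma_{i-1}=0$ and $\gamma_i\ge1$), put $u:=x^\gamma/x_i=x_i^{\gamma_i-1}x_{i+1}^{\gamma_{i+1}}\cdots x_n^{\gamma_n}\in\cN$, let $s:=\pi^{i+1}(u)$, and let $\cB^{(i)}_j$ be the $i$-bar of value $\pi^i(u)=u$; by (i) a $1$-bar labelled $u$ lies over $\cB^{(i)}_j$. If $\deg_i(u)<e_s$, then $\gamma_i\le e_s$, and (ii) gives $x^\gamma=x_i^{\gamma_i}s\in\cN$, contradicting $x^\gamma\notin\cN$; hence $\deg_i(u)=e_s$, i.e. $\cB^{(i)}_j$ is the last $i$-bar over its $(i+1)$-bar. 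If $j<\mu(i)$, then $\cB^{(i)}_j$ and $\cB^{(i)}_{j+1}$ are consecutive $i$-bars lying over distinct $(i+1)$-bars, so case b) of Definition \ref{StarSet} (with $t^{(i)}_j=u$) produces $x_i\pi^i(u)=x^\gamma$; if $j=\mu(i)$, then $\cB^{(i)}_j=\cB^{(i)}_{\mu(i)}$ and case a) (with $t_i=u$) produces the same term. Thus $x^\gamma\in\mathcal{F}_{\sf N}$, which together with the previous inclusion yields $\mathcal{F}_{\sf N}=\mathcal{F}(I)$. The hard part is setting up (ii) cleanly; once it is available, both inclusions are just a matter of matching $\pi^i$-values with bar positions.
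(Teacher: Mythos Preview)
Your argument is correct. Note, however, that the present paper does not actually prove Proposition \ref{DefSt}: it is quoted from \cite{Ce} and stated without proof here, so there is no in-paper argument to compare against. Your two-inclusion approach---resting on the identification of $\pi^i$-values with $i$-bar labels (your fact (i)) and the order-ideal ``staircase'' lemma (your fact (ii))---is the natural direct route and is essentially what one finds in the cited source. The only cosmetic point worth flagging is the boundary case $i=n$: there is no $(n{+}1)$-row, but your convention $s=1$ handles it, and then ``last $i$-bar over its $(i{+}1)$-bar'' collapses to $j=\mu(n)$, which forces case a) of Definition \ref{StarSet} as you implicitly use.
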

The star set $\kF(I)$ of a monomial ideal $I$ is strongly connected to Janet's 
theory \cite{J1,J2,J3,J4} and to the notion of Pommaret basis \cite{Pom, 
PomAk, SeiB}, as explicitly pointed out in \cite{CMR}.
 In particular, for quasi-stable ideals, the star set is finite and coincides with Pommaret bases.
\section{Janet decomposition and completeness.}\label{Jdecsect} 
 Given a monomial/semigroup ideal $J\subset\mathcal{T}$ and its monomial basis ${\sf G}(J)$,
 Janet introduced in  \cite{J1} both the notion of 
 \emph{multiplicative variables} and the connected decomposition of $J$ into disjoint 
  \emph{cones}, characterizing, according to Gerdt-Blinkov notation,
  an \emph{involutive division.}
   \begin{definition}\cite[ppg.75-9]{J1}\label{multiplicative}
Let  $U\subset \mathcal{T}$ be a set of terms
 and  $t=x_1^{\alpha_1}\cdots x_n^{\alpha_n} $
be an element of $U$.
A variable $x_j$ is called \emph{multiplicative}
for $t$ with respect to $U$ if there is no term in
$U$ of the form
$t'=x_1^{\beta_1}\cdots x_j^{\beta_j}x_{j+1}^{\alpha_{j+1}} \cdots 
x_n^{\alpha_n}$
with $\beta_j>\alpha_j$.
 We denote by $M_J(t,U)$ the set of
multiplicative variables for $t$ with respect to $U$.\\
The variables that are not multiplicative for $t$ w.r.t. $U$ 
are called \emph{non-multiplicative} and we denote by $NM_J(t,U)$ the set 
containing them.
\end{definition}
\noindent It is clear that the above definition depends on the order of the variables.
\begin{example}\label{SeveVarOrdPerMolt}
\noindent Consider the set $U=\{x_1,x_2\}\subset\ck[x_1,x_2]$.
If  $x_1<x_2$, then $M_J(x_1,U)=\{x_1\}$, $NM_J(x_1,U)=\{x_2\}$,
$M_J(x_2,U)=\{x_1,x_2\}$, $NM_J(x_2,U)=\emptyset$. If, instead 
 $x_2<x_1$, then $M_J(x_1,U)=\{x_1,x_2\}$, $NM_J(x_1,U)=\emptyset$,
 $M_J(x_2,U)=\{x_2\}$, $NM_J(x_2,U)=\{x_1\}$.
\end{example}
\begin{definition}\label{cone}
With the previous notation, the \emph{cone} of  
$t$ with respect to $U$   is the set
$C_J(t, U):=\{t x_1^{\lambda_1} \cdots x_n^{\lambda_n} \,\vert
\, \textrm{where } \lambda_j\neq 0 \textrm{ only if } x_j 
\textrm{ is multiplicative for }
t \textrm{ w.r.t. } U\}.$
\end{definition}
\begin{example}\label{Molt1}
Consider the set
$J=\{x_1^3,x_2^3, x_1^4x_2x_3,x_3^2\}\subseteq \mathbf{k}[x_1,x_2,x_3]$; suppose 
$x_1<x_2<x_3$. Let $t=x_1^{\alpha_1}x_2^{\alpha_2}x_3^{\alpha_3}=x_1^3$, so $\alpha_1=3,\,\alpha_2=\alpha_3=0$.
The variable $x_1$ is multiplicative for $t$ w.r.t $J$
since there are no terms
 $t'=x_1^{\beta_1}x_2^{\beta_2}x_3^{\beta_3}\in J$ satisfying both 
 conditions  $\beta_1 > 3$and  $\beta_2=\beta_3=0$.
On the other hand, $x_2$ is not multiplicative for $t$ 
since $t''=x_2^3\in U$ satisfies  $t''=x_1^{\gamma_1}x_2^{\gamma_2}
x_3^{\gamma_3}$ with 
$\gamma_2=3>0=\alpha_2$,
$\gamma_3=\alpha_3=0$. Similarly, $x_3$ is not multiplicative since $x_3^2 \in U$. 
In conclusion, we have $M_J(t,U)=\{x_1\}$, $NM_J(t,U)=\{x_2,x_3\}$; 
$C_J(t,U)=\{x_1^h \vert h \in \NN,\, h\geq 3\}$.
\end{example}
\begin{remark}\label{oss: solo tau}  Observe that, 
by definition of multiplicative variable, the only element in 
$C_J(t,U)\cap U$ is $t$ itself. Indeed, if $t \in U$ and also $t s \in  U$ for
a non constant term  $s$, then $\max(s)$ cannot be
multiplicative for $t$, hence.
$t s \notin C_J(t,U)$.
\end{remark}
\noindent Janet introduced then the concept of \emph{complete system} and 
 gave a procedure (\emph{completion})   
  to produce the decomposition in cones.
  \begin{definition}\cite[ppg.75-9]{J1}\label{Complete}
A set of terms $U\subset \mathcal{T}$ is called
\emph{complete} if for every $ t \in U$ and
 $ x_j\in NM_J(t,U)$, there exists $ t' \in U$
such that $x_j t \in C_J(t',U)$.
The term $t'$ is called \emph{involutive divisor} of $x_jt$ w.r.t. Janet division.
  \end{definition}
\noindent  Depending on the notion of multiplicative variable, then also completeness depends
  on the variables' ordering.
  
\begin{remark}\label{singleton}
If $U=\{t\}\subseteq \ck[x_1...,x_n]$ is a singleton, it is complete, 
since  $M_J(t,u)=\{x_1,...,x_n\}$.
\end{remark}
\noindent In the same paper, in order to describe Riquier's \cite{Riq} formulation 
of the description for the general solutions of a PDE problem, Janet 
gave a similar decomposition in terms of disjoint cones, 
generated by multiplicative variables, also for the related normal 
set/order ideal/\emph{escalier}
${\bf{N}}(J)$.\\
The construction of a Bar Code can help to assign to each element $t$ of a finite set of terms
$U\subset \kT$ its multiplicative variables, according to Janet's Definition \ref{multiplicative}.\\
Let $U \subset \kT \subset \ck[x_1,...,x_n] $ be a finite set of terms and suppose
$x_1<x_2<...<x_n$.
As explained in section \ref{BCsect}, we can associate a Bar Code $\cB$ to it. Once $\cB$
is constructed, even if it is not necessary that $\cB$ is an admissible Bar Code,
we can mimick on it the set up we generally perform to construct the star set. In particular:
\begin{itemize}
  \item[a)] $\forall 1 \leq i\leq n$,  place a star symbol $*$ on the right of
  $\cB^{(i)}_{\mu(i)}$;
  \item[b)] $\forall 1 \leq i\leq n-1$, 
  $\forall 1 \leq j \leq \mu(i)-1$ let 
  $\cB^{(i)}_j$ and $\cB^{(i)}_{j+1}$ be two 
  consecutive bars not lying over the 
same $(i+1)$-bar; place a star symbol $*$ between them.
 \end{itemize}
Now, given a term $t \in U$, to detect its multiplicative variables it is enough to check the 
bars over which it lies, as stated in the following proposition (see \cite{BCVJT}).

\begin{Proposition}\label{VMolt}
Let $U \subseteq \kT$ be a finite set of terms and let us denote by $\cB_U$ its Bar Code. For each $t \in U$
$x_i$, $1 \leq i \leq n$ is multiplicative for $t$ if and only if,
in  $\cB_U$, the $i$-bar $\cB^{(i)}_j$, over which $t$ lies, is followed by a star.
\end{Proposition}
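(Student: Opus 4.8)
The plan is to unwind both sides of the claimed equivalence directly from the definitions, using the explicit description of the Bar Code rows in Definition \ref{BarCodeDiag}. Fix $t\in U$ and a variable $x_i$. The term $t$ determines, for each $i$, a unique $i$-bar $\cB^{(i)}_j$ over which its $1$-bar lies (this uniqueness is exactly condition $a.$ in Definition \ref{BCdef1}, applied inductively down the rows). I would first record what the $i$-bar $\cB^{(i)}_j$ encodes: by construction it is the maximal block of consecutive entries in the list $\bM^{[i]}$ that all equal $\pi^i(t)=x_i^{\gamma_i}\cdots x_n^{\gamma_n}$, where $t=x_1^{\gamma_1}\cdots x_n^{\gamma_n}$. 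Thus the bar immediately following $\cB^{(i)}_j$ in row $i$ (if any) corresponds to the next distinct value $\pi^i(t')$ appearing in $\bM^{[i]}$, for some $t'\in U$ with $t'>t$ in Lex, and $\pi^i(t')>\pi^i(t)$.

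\textbf{Key steps.}
First I would translate the ``no star after $\cB^{(i)}_j$'' condition. By the placement rules a)--b) above, a star fails to appear to the right of $\cB^{(i)}_j$ in exactly one situation: $j<\mu(i)$ (so $\cB^{(i)}_j$ is not the last $i$-bar, ruling out a)) \emph{and} $\cB^{(i)}_j$ together with its right neighbour $\cB^{(i)}_{j+1}$ both lie over the same $(i+1)$-bar (ruling out b)). I would then argue that this geometric configuration is equivalent to the existence of a term $t'\in U$ with $\pi^{i+1}(t')=\pi^{i+1}(t)$ but $\pi^i(t')\neq\pi^i(t)$ — equivalently, with $\gamma_{i+1}'=\gamma_{i+1},\dots,\gamma_n'=\gamma_n$ and $\gamma_i'>\gamma_i$ (the inequality is strict and in the stated direction because $\bM^{[i]}$ is Lex-increasing, so the neighbour to the right of $\cB^{(i)}_j$ carries a larger $i$-th exponent). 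This last statement is, verbatim, the negation of ``$x_i$ is multiplicative for $t$ w.r.t.\ $U$'' in Definition \ref{multiplicative}: the witness $t'=x_1^{\beta_1}\cdots x_i^{\beta_i}x_{i+1}^{\gamma_{i+1}}\cdots x_n^{\gamma_n}$ with $\beta_i>\gamma_i$ exists precisely when no star follows $\cB^{(i)}_j$. Putting the two translations together, ``a star follows $\cB^{(i)}_j$'' $\iff$ ``no such $t'$ exists'' $\iff$ ``$x_i$ is multiplicative for $t$'', which is the proposition. For the boundary cases $i=n$ (only rule a) can produce the star) and $j=\mu(i)$ I would note these are handled uniformly: $j=\mu(i)$ forces a star by a), and correspondingly no term of $U$ can have a strictly larger $i$-th exponent with matching higher exponents, so $x_i$ is multiplicative.

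\textbf{Main obstacle.}
The routine-looking part — reading off what each bar means from Definition \ref{BarCodeDiag} — is straightforward. The step that needs genuine care is the equivalence between the \emph{geometric} condition ``$\cB^{(i)}_j$ and $\cB^{(i)}_{j+1}$ lie over the same $(i+1)$-bar'' and the \emph{arithmetic} condition ``$\pi^{i+1}$ agrees on the corresponding terms while $\pi^i$ differs'': one must check that consecutive $i$-bars over a common $(i+1)$-bar are exactly those whose underlying blocks in $\bM^{[i]}$ sit inside one block of $\bM^{[i+1]}$, and conversely, which amounts to verifying that $\pi^i$ refines $\pi^{i+1}$ compatibly with the Lex order on $\bM$. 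I expect this to be the technical heart, and it is where I would be most careful about the direction of the exponent inequality and about the edge effects at the ends of the rows. A clean way to present it is to prove once and for all the auxiliary fact: for $s,s'\in\bM$ adjacent in Lex, $\pi^{i+1}(s)=\pi^{i+1}(s')$ if and only if the $1$-bars of $s$ and $s'$ lie over a common $(i+1)$-bar; the proposition then follows by applying this at level $i$ versus level $i+1$ to the two $1$-bars flanking the gap between $\cB^{(i)}_j$ and $\cB^{(i)}_{j+1}$.
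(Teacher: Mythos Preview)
Your proposal is correct. The paper itself does not supply a proof of this proposition, deferring instead to the citation \cite{BCVJT}; your direct unwinding of the star-placement rules a)--b) against Definition~\ref{multiplicative} via the identification ``same $(i{+}1)$-bar $\Leftrightarrow$ same $\pi^{i+1}$-value'' is precisely the natural argument and goes through as you outline. The only step that requires any care---showing that when a non-multiplicativity witness $t'$ exists, the \emph{immediate} neighbour $\cB^{(i)}_{j+1}$ (not merely some $i$-bar further right) lies over the same $(i{+}1)$-bar as $\cB^{(i)}_j$---follows from the contiguity built into condition~a.\ of Definition~\ref{BCdef1}, and you have correctly flagged this as the point demanding attention.
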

\begin{example}\label{BCMolt}
For the set $U=\{x_1^3,x_2^3, x_1^4x_2x_3,x_3^2\}\subseteq
\mathbf{k}[x_1,x_2,x_3]$, $x_1<x_2<x_3$, of example \ref{Molt1}, we have 
the following Bar Code

\begin{minipage}{3.6cm}
\begin{center}
\begin{tikzpicture}[scale=0.8]
\node at (3.6,0.5) [] {${\scriptscriptstyle 0}$};
\node at (3.6,0) [] {${\scriptscriptstyle 1}$};
\node at (3.6,-0.5) [] {${\scriptscriptstyle 2}$};
\node at (4.2,0.5) [] {${\tiny x_1^3}$};
\node at (5.2,0.5) [] {${\tiny x_2^3}$};
\node at (6.2,0.5) [] {${\tiny x_1^4x_2x_3}$};
\node at (7.2,0.5) [] {${\tiny x_3^2}$};

\node at (3.6,-1) [] {${\scriptscriptstyle 3}$};

\node at (4.7,0) [] {${*}$};
\node at (5.7,0) [] {${*}$};
\node at (6.7,0) [] {${*}$};
\node at (7.7,0) [] {${*}$};

\node at (5.7,-0.5) [] {${*}$};
\node at (6.7,-0.5) [] {${*}$};
\node at (7.7,-0.5) [] {${*}$};

\node at (7.7,-1) [] {${*}$};

\draw [thick] (4,0) --(4.5,0);
\draw [thick] (5,0) --(5.5,0);
\draw [thick] (6,0) --(6.5,0);
\draw [thick] (7,0) --(7.5,0);

\draw [thick] (4,-0.5) --(4.5,-0.5);
\draw [thick] (5,-0.5) --(5.5,-0.5);
\draw [thick] (6,-0.5) --(6.5,-0.5);
\draw [thick] (7,-0.5) --(7.5,-0.5);

\draw [thick] (4,-1) --(5.5,-1);
\draw [thick] (6,-1) --(6.5,-1);
\draw [thick] (7,-1) --(7.5,-1);
\end{tikzpicture}
\end{center}
\end{minipage}
\hspace{0.2cm}
\begin{minipage}{8.5cm}
\indent Then, looking at the stars, we can desume that:\\
$\bullet$ $M_J(x_1^3,U)=\{x_1\},\, NM_J(x_1^3,U)=\{x_2,x_3\};$\\
$\bullet$ $M_J(x_2^3,U)=\{x_1x_2\},\, NM_J(x_2^3,U)=\{x_3\} ;$\\
$\bullet$  $M_J(x_1^4x_2x_3,U)\!=\!\{x_1,x_2\}, \!NM_J(x_1^4x_2x_3,U)\!=\!\{x_3\} ;$\\
$\bullet$  $M_J(x_3^2,U)=\{x_1,x_2,x_3\},\,   NM_J(x_3^2,U)=\emptyset.$
\end{minipage}

\end{example}
\begin{remark}\label{SeilerGerdt}
  The Bar Code we are using to detect multiplicative variables is a reformulation of Gerdt-Blinkov-Yanovich \emph{Janet trie} \cite{GBY}, but in the (equivalent) presentation given by Seiler \cite{SeiB}. However, given a finite set of terms, the algorithms for producing its Janet decomposition which can be deduced from both the formulations above of the Janet tree, are different from the algorithm naturally arising from the previous proposition.
\end{remark}

\noindent In \cite{J1}, starting from his definition of multiplicative variable (Definition  
\ref{multiplicative}), Janet deduces the following straightforward corollary, whose proof is 
 reported in \cite{J4}.
\begin{cor}[\cite{J1}]\label{CorollCompl} 
Let $U=\{t_1,...,t_m\}\subseteq \mathcal{T}$ be a finite set of terms, $t_i=x_1^{\alpha_1^{(i)}}\cdots x_n^{\alpha_n^{(i)}}$ and  $t_i'=x_1^{\alpha_1^{(i)}}\cdots 
x_{n-1}^{\alpha_{n-1}^{(i)}}=\frac{t_i}{x_n^{\alpha_n^{(i)}}}$, for $ i=1,...,m$.  Let $U'=\{t'_1,...,t'_m\}$, $\alpha=\max\{\alpha_n^{(i)}$, $1 \leq i \leq m\}$. For each $\lambda \leq \alpha$, we define 
$I_{\lambda}:=\{i :\, 1 \leq i \leq m \vert \alpha_n^{(i)}=\lambda\}$, the 
set indexing the terms in $U$ with $n$-th degree equal to $\lambda$, and
$U'_{\lambda}:=\{t'_i \vert i \in I_{\lambda}\}$.
Then $U$ is complete if and only if the two conditions below hold:
\begin{enumerate}
\item  For each $\lambda \in \{\alpha_n^{(i)},\, 1 \leq i \leq m\} $, $U'_{\lambda}$ is a complete set;
\item $\forall t'_i\in U'_{\lambda}$, $\lambda < \alpha   $,  there exists $j\in \{1,...,m\}$ such that
         \begin{itemize}
          \item $t'_i\in C_J(t'_j,U')$;
           \item  $t'_j\in U'_{\lambda +1}.$
         \end{itemize}
 \end{enumerate}
\end{cor}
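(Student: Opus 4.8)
The statement is an ``induction on the last variable'' characterisation of completeness, so the natural strategy is to unwind Definition~\ref{Complete} and Definition~\ref{multiplicative} in terms of how $x_n$ interacts with the data. First I would record the key observation that governs everything: for a term $t_i\in U$, the variable $x_j$ with $j<n$ is multiplicative for $t_i$ w.r.t.\ $U$ \emph{if and only if} $x_j$ is multiplicative for $t'_i$ w.r.t.\ $U'_{\alpha_n^{(i)}}$ (and not merely w.r.t.\ $U'$), because Definition~\ref{multiplicative} freezes the exponents $\alpha_{j+1},\dots,\alpha_n$, and in particular freezes the $x_n$-exponent; only the terms of $U$ sharing the same $n$-degree $\lambda=\alpha_n^{(i)}$ can obstruct multiplicativity of a variable $x_j$, $j<n$, for $t_i$. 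Dually, $x_n$ is multiplicative for $t_i$ w.r.t.\ $U$ iff no term of $U$ has strictly larger $n$-degree, i.e.\ iff $\alpha_n^{(i)}=\alpha$. These two remarks convert all the ``cone/multiplicative'' bookkeeping for $U$ into bookkeeping for the layers $U'_\lambda$ together with the extra variable $x_n$.

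\textbf{The $(\Rightarrow)$ direction.} Assume $U$ complete. For condition (1): fix $\lambda$, fix $t'_i\in U'_\lambda$ and a variable $x_j\in NM_J(t'_i,U'_\lambda)$ with $j\le n-1$; by the first remark, $x_j\in NM_J(t_i,U)$ too, so completeness of $U$ gives $t_k\in U$ with $x_j t_i\in C_J(t_k,U)$. Since multiplying $t_i$ by $x_j$ ($j<n$) does not change the $n$-degree, and since $x_n$ being multiplicative or not is irrelevant here, a short argument shows $t_k$ must itself have $n$-degree $\lambda$ (otherwise the factor realising $x_jt_i=t_k\cdot(\text{product of multiplicatives})$ would require changing $\deg_n$, forcing $x_n$ to appear, but $x_n$ multiplicative for $t_k$ forces $\deg_n(t_k)=\alpha$, and then $\deg_n(x_jt_i)=\lambda\neq\alpha$ unless $\lambda=\alpha$, contradiction; the case $\lambda=\alpha$ is handled by noting the product cannot raise $\deg_n$ above $\alpha$). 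Hence $t'_k\in U'_\lambda$ and $x_j t'_i\in C_J(t'_k,U'_\lambda)$, proving $U'_\lambda$ complete. For condition (2): fix $\lambda<\alpha$ and $t'_i\in U'_\lambda$; then $x_n\in NM_J(t_i,U)$ (since $\deg_n(t_i)=\lambda<\alpha$), so completeness gives $t_j\in U$ with $x_n t_i\in C_J(t_j,U)$. Writing $x_n t_i = t_j\cdot x_1^{\lambda_1}\cdots x_n^{\lambda_n}$ with only multiplicative exponents nonzero, comparing $n$-degrees gives $\lambda+1=\deg_n(t_j)+\lambda_n$; one argues $\lambda_n=0$ (if $\lambda_n>0$ then $x_n$ is multiplicative for $t_j$, forcing $\deg_n(t_j)=\alpha$ and $\lambda+1\ge\alpha+1>\alpha$, impossible since $\lambda<\alpha$ gives $\lambda+1\le\alpha$; the boundary case $\lambda+1=\alpha$ still forces $\deg_n(t_j)=\alpha$ and $\lambda_n=0$), hence $\deg_n(t_j)=\lambda+1$, i.e.\ $t'_j\in U'_{\lambda+1}$, and dividing by $x_n^{\lambda+1}$ on both sides shows $t'_i\in C_J(t'_j,U')$ — here one checks that the surviving multiplicative variables $x_k$ ($k\le n-1$) for $t_j$ w.r.t.\ $U$ are in particular multiplicative for $t'_j$ w.r.t.\ $U'$.

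\textbf{The $(\Leftarrow)$ direction.} Assume (1) and (2). Fix $t_i\in U$ with $n$-degree $\lambda$ and $x_j\in NM_J(t_i,U)$. If $j\le n-1$: by the first remark $x_j\in NM_J(t'_i,U'_\lambda)$, so (1) gives $t'_k\in U'_\lambda$ with $x_j t'_i\in C_J(t'_k,U'_\lambda)$; lifting back (multiplying both sides by $x_n^\lambda$, noting $t_i,t_k$ have the same $n$-degree $\lambda$ and $x_n$-behaviour is untouched because $j<n$) gives $x_j t_i\in C_J(t_k,U)$. If $j=n$: then $\lambda<\alpha$, so (2) gives $t'_j\in U'_{\lambda+1}$ with $t'_i\in C_J(t'_j,U')$; I would then show this lifts to $x_n t_i\in C_J(t_j,U)$, using that $t_j$ has $n$-degree $\lambda+1$ and that $t'_i\in C_J(t'_j,U')$ means $t'_i = t'_j\cdot(\text{product of variables multiplicative for }t'_j\text{ w.r.t. }U')$ — one must upgrade ``multiplicative w.r.t.\ $U'$'' to ``multiplicative w.r.t.\ $U$'', which follows because a non-multiplicative obstruction in $U$ for $t_j$ in a variable $x_k$, $k<n$, would (by the first remark applied to $t_j$ at level $\lambda+1$) be an obstruction in $U'_{\lambda+1}\subseteq U'$, contradicting multiplicativity w.r.t.\ $U'$; and $x_n$ is automatically available on $t_i$ since $\deg_n(t_i)=\lambda$ and we add exactly one.

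\textbf{Main obstacle.} The routine part is the definition-chasing; the delicate part is precisely the passage between ``multiplicative with respect to $U'$'' versus ``with respect to the single layer $U'_\lambda$'' versus ``with respect to $U$'', and keeping track of when the $x_n$-exponent is forced. The cleanest route is to prove once and for all the lemma: for $t_i\in U$ with $\deg_n(t_i)=\lambda$ and any $k\le n-1$, $\;x_k\in M_J(t_i,U)\iff x_k\in M_J(t'_i,U'_\lambda)$, and separately $x_n\in M_J(t_i,U)\iff \lambda=\alpha$; once this is in hand, both directions reduce to careful but mechanical comparison of $n$-degrees in the cone-membership equations. I would also lean on Remark~\ref{oss: solo tau} and Remark~\ref{singleton} to dispatch the base case $\alpha=0$ (where $U=U'$ and the statement is vacuous) and to handle the top layer $\lambda=\alpha$ in condition (1).
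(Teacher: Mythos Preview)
The paper does not supply its own proof of this corollary; it attributes the result to Janet and refers to \cite{J4} for the argument. So there is nothing to compare against, and your plan is the natural direct attack via the key lemma you state at the end: for $k\le n-1$, $x_k\in M_J(t_i,U)\iff x_k\in M_J(t'_i,U'_{\lambda})$ where $\lambda=\deg_n(t_i)$, and $x_n\in M_J(t_i,U)\iff \lambda=\alpha$. That lemma is correct and does most of the work.

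There is, however, a genuine gap in your $(\Rightarrow)$ argument for condition~(2). You write that ``the surviving multiplicative variables $x_k$ ($k\le n-1$) for $t_j$ w.r.t.\ $U$ are in particular multiplicative for $t'_j$ w.r.t.\ $U'$'', but this implication is false. Multiplicativity of $x_k$ for $t_j$ in $U$ only tests against terms of $U$ sharing the \emph{full} tail $x_{k+1}^{\alpha_{k+1}^{(j)}}\cdots x_n^{\alpha_n^{(j)}}$, in particular the same $n$-degree $\lambda+1$; but $U'$ collects projections from \emph{every} $n$-degree, so an obstruction for $t'_j$ in $U'$ can come from some $U'_\mu$ with $\mu\neq\lambda+1$. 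Concretely, take $U=\{x_1^2,\,x_1x_2\}$ with $x_1<x_2$ (this is exactly Example~\ref{esempioCompleto}, where $U$ is shown complete). Here $U'=\{x_1^2,x_1\}$, $\alpha=1$, and for $t'_i=x_1^2\in U'_0$ the only candidate is $t'_j=x_1\in U'_1$; but $x_1\notin M_J(x_1,U')$ since $x_1^2\in U'$ obstructs it, so $x_1^2\notin C_J(x_1,U')$ and condition~(2) as printed fails for a complete $U$.

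The defect is therefore in the \emph{statement}, not in your method: the cone in condition~(2) should read $C_J(t'_j,U'_{\lambda+1})$ rather than $C_J(t'_j,U')$. With that correction your key lemma applies on the nose in both directions (and indeed your $(\Leftarrow)$ argument already uses the one-sided inclusion $M_J(t'_j,U')\subseteq M_J(t'_j,U'_{\lambda+1})\cong M_J(t_j,U)$, which \emph{is} valid). Once the typo is fixed, your plan goes through cleanly; you might also tighten the $n$-degree bookkeeping in condition~(1) by simply noting $\deg_n(t_k)\le\deg_n(x_jt_i)=\lambda$ from divisibility, and $\deg_n(t_k)<\lambda$ forces $x_n\mid m$ hence $\deg_n(t_k)=\alpha\ge\lambda$, a contradiction.
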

\noindent Completeness of a given finite set $U$ can be detected by exploiting the 
Bar Code, as stated in the following proposition.
\begin{Proposition}\label{CompletoTest}
  Let $U \subseteq \kT$ be a finite set of terms and $\cB$ be its Bar Code.
  Let $t \in U$, $x_i \in NM_J(t,U)$ and  $\cB^{(i)}_j$ the $i$-bar under 
  $t$.\\
  Let $s \in U$; it holds $s \mid_J x_it$ if and only if
  \begin{enumerate}
   \item $s \mid x_it$
   \item $s$ lies over  $\cB^{(i)}_{j+1}$ and 
   \item $\forall j'$ appearing with nonzero 
  exponent in $\frac{x_it}{s}$ there is a star after the 
$j'$-bar under $s$. 
  \end{enumerate}
\end{Proposition}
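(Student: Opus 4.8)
The plan is to unwind the three numbered conditions directly from Definition~\ref{Complete} of involutive divisor together with the characterization of multiplicative variables given by Proposition~\ref{VMolt}. Recall that $s \mid_J x_i t$ means, by definition, that $s \in U$ and $x_i t \in C_J(s, U)$; by Definition~\ref{cone} the latter says exactly that $s \mid x_i t$ and that every variable $x_{j'}$ occurring with nonzero exponent in the quotient $\frac{x_i t}{s}$ is multiplicative for $s$ with respect to $U$. So condition~(1) is literally the divisibility half of membership in the cone, and the content of the proposition is that conditions~(2) and~(3) together encode ``every variable in the support of $\frac{x_i t}{s}$ is multiplicative for $s$'', \emph{once} we also know~(1). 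First I would establish the easy direction: assuming $s \mid_J x_i t$, condition~(1) is immediate, condition~(3) is immediate from Proposition~\ref{VMolt} applied to each $x_{j'}$ in the support of $\frac{x_i t}{s}$ (multiplicativity of $x_{j'}$ for $s$ is equivalent to a star following the $j'$-bar under $s$), and condition~(2) must be derived — this is where the structure of the Bar Code enters.

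For condition~(2) in the forward direction, the key observation is that $t$ and $s$ must both lie over $i$-bars that sit inside the same $(i+1)$-block, because $\pi^{i+1}(s) = \pi^{i+1}(t)$; indeed $s \mid x_i t$ forces $\deg_h(s) \le \deg_h(t)$ for all $h > i$, while $x_i \notin \supp\left(\frac{x_i t}{s}\right)$ is impossible (since $x_i$ raises the $i$-degree by one, it always lies in the support), hence — using that $x_i$ must be multiplicative for $s$ and $x_i$ must be \emph{non}-multiplicative for $t$ — we get $\deg_i(s) > \deg_i(t)$ is ruled out and $\deg_i(s) = \deg_i(t)$ is ruled out, so in fact $\deg_i(s) = \deg_i(t) + 1$ and $\deg_h(s) = \deg_h(t)$ for all $h > i$. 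Reading this off the Bar Code: the $i$-bars are grouped by the value of $\pi^{i+1}$, ordered within each $(i+1)$-block by increasing $i$-degree (by construction of the Bar Code diagram, Definition~\ref{BarCodeDiag}, and the labelling procedure $\mathfrak{B}1$, $\mathfrak{B}2$); having the same $\pi^{i+1}$ and an $i$-degree exactly one larger means $s$ lies over the $i$-bar immediately to the right of $\cB^{(i)}_j$, namely $\cB^{(i)}_{j+1}$.

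Conversely, assuming~(1), (2), (3), I would reconstruct $s \mid_J x_i t$: condition~(1) gives $s \mid x_i t$; to conclude $x_i t \in C_J(s,U)$ I must check each $x_{j'}$ in $\supp\left(\frac{x_i t}{s}\right)$ is multiplicative for $s$. For $j' \neq i$ this is exactly condition~(3) via Proposition~\ref{VMolt}. For $j' = i$: condition~(2) says $s$ lies over $\cB^{(i)}_{j+1}$, and since $x_i \in NM_J(t,U)$, Proposition~\ref{VMolt} tells us that $\cB^{(i)}_j$ is \emph{not} followed by a star, which — by the star-placement rule~b) (a star is placed between consecutive $i$-bars $\cB^{(i)}_j$, $\cB^{(i)}_{j+1}$ exactly when they do not lie over the same $(i+1)$-bar) — means $\cB^{(i)}_j$ and $\cB^{(i)}_{j+1}$ \emph{do} lie over the same $(i+1)$-bar; I then need to argue this forces a star after $\cB^{(i)}_{j+1}$, i.e.\ that $x_i$ is multiplicative for $s$. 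This is the delicate point and will require a short combinatorial argument: within a fixed $(i+1)$-block the $i$-bars are linearly ordered, and $\cB^{(i)}_{j+1}$ is the last one before leaving the block iff there is a star after it; I must rule out the existence of a further $i$-bar $\cB^{(i)}_{j+2}$ still in the same block, which would correspond to a term in $U$ of the form $x_1^{*}\cdots x_i^{\beta_i} \pi^{i+1}(s)$ with $\beta_i > \deg_i(s)$, contradicting... — and here one uses precisely that $s \mid x_i t$ with $\deg_i(t) = \deg_i(s) - 1$ forces $\deg_i(s) = \deg_i(t)+1 = \deg_i(x_i t)$, so such a $\cB^{(i)}_{j+2}$ would have $i$-degree $> \deg_i(x_i t) \geq \deg_i(x_i t)$, but this does \emph{not} immediately contradict anything about $t$ — so the cleanest route is to invoke Corollary~\ref{CorollCompl}'s perspective, or more simply to observe that whenever $x_i$ is non-multiplicative for $t$, the very next $i$-bar $\cB^{(i)}_{j+1}$ is the \emph{top} of the column in its $(i+1)$-block (there is no term of $U$ with the same $\pi^{i+1}$ and strictly larger $i$-degree than that of $\cB^{(i)}_{j+1}$, for otherwise $x_i$ would already be non-multiplicative one step earlier in a way consistent with the bar ordering). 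I expect the main obstacle to be exactly this last step — pinning down, purely from the Bar Code combinatorics, that ``$\cB^{(i)}_j$ has no star but $\cB^{(i)}_{j+1}$'' forces ``$\cB^{(i)}_{j+1}$ has a star'' — which ultimately rests on the fact that the $i$-bars over a common $(i+1)$-bar carry consecutive $i$-degrees $0, 1, 2, \dots$ and that $x_i t$, having $i$-degree one more than $t$, lands on the immediately following bar, which by finiteness of $U$ and the admissibility-style bookkeeping of $\mathfrak{B}2$ is the maximal one in its block precisely when no further term extends it.
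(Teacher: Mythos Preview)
Your $\Leftarrow$ direction contains a genuine gap. You try to show that $x_i$ is multiplicative for $s$ (a star after $\cB^{(i)}_{j+1}$), but this is false in general: take $U=\{x_1,\,x_1x_2,\,x_1x_2^2\}\subset\mathbf{k}[x_1,x_2]$, $t=x_1$, $x_i=x_2$, $s=x_1x_2$. Conditions (1)--(3) hold (the quotient $\frac{x_2t}{s}$ equals $1$, so (3) is vacuous) and indeed $s\mid_J x_2t$, yet $x_2\in NM_J(s,U)$ because of $x_1x_2^2$. The resolution is the observation you almost reach in the other direction: from (2) together with $x_i\in NM_J(t,U)$ one gets that $\cB^{(i)}_j$ and $\cB^{(i)}_{j+1}$ lie over the same $(i+1)$-bar (no star between them, by star rule b)), hence $\pi^{i+1}(s)=\pi^{i+1}(t)$ and $\deg_i(s)>\deg_i(t)$; then (1) gives $\deg_i(s)\le\deg_i(t)+1$, so $\deg_i(s)=\deg_i(t)+1$ and $x_i$ simply does not occur in $\frac{x_it}{s}$. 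There is nothing to verify for $j'=i$. This is exactly the paper's argument for $\Leftarrow$. (In fact, since condition (3) as written quantifies over \emph{every} $j'$ in the support of the quotient, (1) and (3) alone already yield $s\mid_J x_it$ via Proposition~\ref{VMolt}; the role of (2) in this direction is only to explain why no constraint at level $i$ arises.)

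Your forward direction also has errors of logic, not just of exposition: the claim ``$x_i\notin\supp\bigl(\frac{x_it}{s}\bigr)$ is impossible'' is wrong --- it is precisely what happens --- and the chain ``$\deg_i(s)>\deg_i(t)$ ruled out, $\deg_i(s)=\deg_i(t)$ ruled out, hence $\deg_i(s)=\deg_i(t)+1$'' is incoherent as written. The paper handles this direction instead by a clean case split on the index $l$ of the $i$-bar $\cB^{(i)}_l$ under $s$: the cases $l=j$, $l>j+1$, and $l<j$ are each excluded by exhibiting either a non-multiplicative variable in the quotient or a failure of ordinary divisibility, leaving $l=j+1$.
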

\begin{proof}
  ``$\Leftarrow$'' It is an obvious consequence of Proposition \ref{VMolt}; indeed, by 1. $s \mid x_it$.  Thanks to 3.,
  all the variables in $\frac{x_it}{s}$ are multiplicative. Note that $x_i$ is not a variable of  $w:=\frac{x_it}{s}$ and 
 it does not need to be multiplicative for $s$, since, $s$ lies over  $\cB^{(i)}_{j+1}$, so $\deg_i(s)=\deg_i(t)+1$.
 So $sw=x_it$ and $w$ contains only multiplicative variables for $s$; therefore $s \mid_J x_it$.
  \\
  ``$\Rightarrow$''  Let $s \in U$,  $s \mid_J x_it$; $s\mid x_it$ by definition of Janet division\footnote{And actually of involutive division.}.\\
  If $s$ would lie over  $\cB^{(i)}_j$, then $\deg_l(s)=\deg_l(t)$ for $l=i,...,n$, i.e. in $s$ and $t$ the variables $x_i,...,x_n$ appear with the same exponent. Then, being $s \mid x_it$, $x_i \in 
V_s:=\{x_j, 1\leq j\leq n: \,  x_j \mid w:=\frac{x_it}{s}\}$, so $x_i$ should be multiplicative for $s$, this meaning having a star after $\cB^{(i)}_j$, which is impossible by hypothesis, since 
in this case $x_it \in C_J(s,U) \cap C_J(t,U)$.\\
  If $s$ would lie over  $\cB^{(i)}_l$, $l >j+1$, there exists $h\in \{i,...,n\}$ s.t. $\deg_h(s)>\deg_h(x_it)$, so $s \nmid x_it$, which is again a contradiction.\\
  If $s$ would lie over  $\cB^{(i)}_l$, $l <j$, then $s<_{Lex} t$ and it cannot happen that $\deg_{l'}(s)=\deg_{l'}(t)$ for $l'=i,...,n$ (since otherwise $s$ would have been over $\cB^{(i)}_j$). Let 
$x_k:=\max\{x_h,\, h=1,...,n \vert \deg_h(s)<\deg_h(t)\}$; then, since $t \in U$ and $\deg_n(t)=\deg_n(s),...,$ $\deg_{k+1}(t)=\deg_{k+1}(s)$ and $\deg_k(t)>\deg_k(s)$, by definition of multiplicative 
variable according to  Janet division, $x_k \in NM_J(s,U)$.
  Then $s$ must lie over $\cB^{(i)}_{j+1}$.
  For being $s \mid x_it$, all the variables appearing with nonzero exponent in $\frac{x_it}{s}$ must be multiplicative for $s$, and this implies that 
   $\forall j'$ appearing with nonzero exponent in $\frac{x_it}{s}$ there is a star after the $j'$-bar under $s$, by Proposition \ref{VMolt}. 
\end{proof}

\noindent From Proposition \ref{CompletoTest} we finally get the following
\begin{thrm}\label{CriterioCompleto}
  Let $U \subseteq \kT$ be a finite set of terms and $\cB$ be its Bar Code. Then $U$ is a complete set if and only if $\forall t \in U$, 
 $\forall x_i \in NM_J(t,U)$, called  $\cB^{(i)}_j$ the $i$-bar under 
  $t$, there exists a term $s \in U$  satisfying conditions $1,2,3$ of Proposition \ref{CompletoTest}.
\end{thrm}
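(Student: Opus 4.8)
The plan is to derive Theorem \ref{CriterioCompleto} directly from the definition of completeness (Definition \ref{Complete}) together with Proposition \ref{CompletoTest}, which already does the substantive work. Recall that $U$ is complete if and only if for every $t\in U$ and every $x_i\in NM_J(t,U)$ there exists $t'\in U$ with $x_it\in C_J(t',U)$, i.e. $t'\mid_J x_it$. So the statement to prove is merely a \emph{translation}: for fixed $t$ and $x_i\in NM_J(t,U)$, the existence of an involutive divisor $s\in U$ of $x_it$ is equivalent to the existence of an $s\in U$ satisfying conditions $1,2,3$ of Proposition \ref{CompletoTest}. But this equivalence is exactly the content of Proposition \ref{CompletoTest} itself, which asserts $s\mid_J x_it \iff (1)\wedge(2)\wedge(3)$. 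Hence the proof is essentially a one-line quantifier manipulation.

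Concretely, first I would fix the notation: let $t\in U$, let $x_i\in NM_J(t,U)$, and let $\cB^{(i)}_j$ be the $i$-bar under $t$ in $\cB$ (this bar exists and is unique because $\cB$ is the Bar Code of $U$ and $t\in U$ labels a $1$-bar lying over a unique $i$-bar, by condition $a.$ of Definition \ref{BCdef1}). Then I would invoke Proposition \ref{CompletoTest} to rewrite the clause ``$\exists s\in U$ such that $s\mid_J x_it$'' as ``$\exists s\in U$ satisfying conditions $1,2,3$''. Finally I would quantify this equivalence over all $t\in U$ and all $x_i\in NM_J(t,U)$: $U$ is complete $\iff$ for all such $t$ and $x_i$ an involutive divisor of $x_it$ exists in $U$ $\iff$ for all such $t$ and $x_i$ a term $s\in U$ satisfying $1,2,3$ of Proposition \ref{CompletoTest} exists, which is precisely the assertion of the theorem.

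I do not expect any serious obstacle, since all the technical content — the geometric characterization of Janet-divisibility in terms of bars and stars, using Proposition \ref{VMolt} — has been discharged in Proposition \ref{CompletoTest}. The only point requiring a word of care is that the ``involutive divisor'' $t'$ in Definition \ref{Complete} and the term $s$ in Proposition \ref{CompletoTest} denote the same object, namely a term of $U$ whose cone contains $x_it$; one should note that $x_it\in C_J(s,U)$ is by Definition \ref{cone} literally the statement $s\mid_J x_it$ used in Proposition \ref{CompletoTest}. A secondary (but trivial) remark worth making is that if $NM_J(t,U)=\emptyset$ for every $t$ — e.g. in the singleton case of Remark \ref{singleton} — the universally quantified condition is vacuously satisfied, consistently with $U$ being complete. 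With these identifications in place, the theorem follows immediately, so the ``proof'' is genuinely just the assembly of Definition \ref{Complete} and Proposition \ref{CompletoTest}, and I would present it in two or three sentences.
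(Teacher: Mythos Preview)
Your proposal is correct and matches the paper's own treatment: the paper introduces Theorem \ref{CriterioCompleto} with the words ``From Proposition \ref{CompletoTest} we finally get the following'' and gives no further proof, treating it exactly as you do --- an immediate translation of Definition \ref{Complete} through the equivalence established in Proposition \ref{CompletoTest}.
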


\noindent According to  Proposition \ref{CompletoTest} and Theorem \ref{CriterioCompleto}, given a finite set of term $U \subseteq \kT$,
to check its completeness we take, $\forall t \in U$, $\forall x_i \in NM_J(t,U)$, the
$i$-bar $\cB_j^{(i)},\, 1\leq j \leq \mu(i)$ under $t$ and we look for an involutive divisor among
 the terms over $\cB_{j+1}^{(i)}$, checking conditions 1,3 above. We see now two simple 
 examples of this procedure.
\begin{example}\label{esempioNonCompleto}
 Coming back to examples \ref{Molt1} and  \ref{BCMolt}, we consider again 
 $U=\{x_1^3,x_2^3, x_1^4x_2x_3,$  $x_3^2\}\subseteq
\mathbf{k}[x_1,x_2,x_3]$ and its Bar Code

\begin{minipage}{4.5cm}
\begin{center}
\begin{tikzpicture}
\node at (3.6,0.5) [] {${\scriptscriptstyle 0}$};
\node at (3.6,0) [] {${\scriptscriptstyle 1}$};
\node at (3.6,-0.5) [] {${\scriptscriptstyle 2}$};
\node at (4.2,0.5) [] {${\small x_1^3}$};
\node at (5.2,0.5) [] {${\small x_2^3}$};
\node at (6.2,0.5) [] {${\small x_1^4x_2x_3}$};
\node at (7.2,0.5) [] {${\small x_3^2}$};

\node at (3.6,-1) [] {${\scriptscriptstyle 3}$};

\node at (4.7,0) [] {${*}$};
\node at (5.7,0) [] {${*}$};
\node at (6.7,0) [] {${*}$};
\node at (7.7,0) [] {${*}$};

\node at (5.7,-0.5) [] {${*}$};
\node at (6.7,-0.5) [] {${*}$};
\node at (7.7,-0.5) [] {${*}$};

\node at (7.7,-1) [] {${*}$};

\draw [thick] (4,0) --(4.5,0);
\draw [thick] (5,0) --(5.5,0);
\draw [thick] (6,0) --(6.5,0);
\draw [thick] (7,0) --(7.5,0);

\draw [thick] (4,-0.5) --(4.5,-0.5);
\draw [thick] (5,-0.5) --(5.5,-0.5);
\draw [thick] (6,-0.5) --(6.5,-0.5);
\draw [thick] (7,-0.5) --(7.5,-0.5);

\draw [thick] (4,-1) --(5.5,-1);
\draw [thick] (6,-1) --(6.5,-1);
\draw [thick] (7,-1) --(7.5,-1);
\end{tikzpicture}
\end{center}
\end{minipage}
\hspace{0.1cm}
\begin{minipage}{7cm}
Take $t=x_1^3$ and $x_2 \in NM_J(t,U)=\{x_2,x_3\}$; $t$ lies over $\cB^{(2)}_1$
and the only term over $\cB^{(2)}_2$ is $x_2^3 \nmid x_1^3x_2=tx_2$, so $tx_2$ 
has no involutive divisor on $U$ and this implies that our set is actually non-complete.
\end{minipage}

\end{example}
\begin{example}\label{esempioCompleto} 
 Consider the set $U=\{x^2,xy\}\subset \ck[x,y],\, x<y$. Its Bar Code is 
 
\begin{minipage}{3cm} 
\begin{center}
\begin{tikzpicture}
\node at (3.6,0.5) [] {${\scriptscriptstyle 0}$};
\node at (3.6,0) [] {${\scriptscriptstyle 1}$};
\node at (3.6,-0.5) [] {${\scriptscriptstyle 2}$};
\node at (4.2,0.5) [] {${\small x^2}$};
\node at (5.2,0.5) [] {${\small xy}$};

\node at (4.7,0) [] {${*}$};
\node at (5.7,0) [] {${*}$};

\node at (5.7,-0.5) [] {${*}$};

\draw [thick] (4,0) --(4.5,0);
\draw [thick] (5,0) --(5.5,0);
 
\draw [thick] (4,-0.5) --(4.5,-0.5);
\draw [thick] (5,-0.5) --(5.5,-0.5);

\end{tikzpicture}
\end{center}
\end{minipage}
\begin{minipage}{9cm}
so, looking at the stars, we can desume $M_J(x^2,U)=\{x\}$, $NM_J(x^2,U)=\{y\}$, $M_J(xy,U)=\{x,y\}$, $NM_J(xy,U)=\emptyset$.
Now, $t=x^2$ lies over $\cB^{(2)}_1$ and over 
 $\cB^{(2)}_2$ there is only $xy$ 
\end{minipage}

 s.t. $xy \mid x^2y$.
 Since $x \in M_J(xy,U)$, $xy \mid_J x^2y$ and we can conclude that $U$ is complete, w.r.t. the 
 given ordering on the variables.
 \end{example}
\section{A greedy algorithm for complete sets.}\label{greedysect}

In this section, given a finite set of terms $U=\{t_1,...,t_m\}\subseteq \mathcal{T}$,
we try to find out whether there exists an
  \emph{ordering on the variables} $x_1,...,x_n$ such that $U$ 
is \emph{complete}.
As explained in section \ref{Jdecsect}, the Bar Code allows to detect the completeness of $U$. 
Clearly such a construction depends on the variables' ordering, 
so if we want to solve the problem, in principle,  we 
should draw and check $n!$ different Bar Codes, which turns out to
be rather tedious and time consuming.
Exploiting again the Bar Code  and Corollary \ref{CorollCompl},
we can look for the solution of our problem in a ``greedy" way, 
so that most of the tests can be skipped.
The idea consists in constructing the Bar Code $\cB$ of the set $U=\{t_1,...,t_m\}\subset \kT$
from the maximal variable to the minimal one, checking if, with the choice 
made up to the current point on the variables' ordering, the conditions of 
 Proposition \ref{CompletoTest} hold for each term in $U$, and going back retracting our steps
 in case of failure, so modifying previous choices.
\\ 
Let $X=\{x_1,...,x_n\}$ be the set of all variables. 
In the first step we look for the subset $Y\subseteq X$ of  good candidates for being the maximal variable, 
scanning  the elements of $X$ (see Corollary \ref{CorollCompl}).
For $i=1,...,n$, we compute the sets 
$D_i:=\{\beta \in \NN \vert \exists t \in U,\, \deg_{i}(t)=\beta\}$.
If for some $\gamma\in D_i$ $\gamma <\max(D_i)$, $\gamma + 1 \notin D_i$, then 
$x_i$ cannot be the maximal variable. Indeed, if $x_i$ would be the maximal variable,
then 
 there would exist $t_1'=\frac{t_1}{x_1^{\gamma}}\in U_{\gamma}'$
 and by Corollary \ref{CorollCompl}, we would need a term 
in $U_{\gamma+1}'$, which is actually the empty set.
Clearly, if $Y=\emptyset$, no variable is suitable for being the maximal one, making $U$
complete and this implies that $U$ is not complete for any variables' ordering.
\begin{example}\label{Salti}
Consider $U=\{x_1x_2^3,x_1^3x_2\}\subset \mathbf{k}[x_1,x_2]$. Such a set is not
complete  since $D_1=D_2=\{1,3\}$. As a confirmation, we can see that, if  $x_1<x_2$, we have

\begin{minipage}{4cm}
\begin{center}
\begin{tikzpicture}
\node at (3.6,0.5) [] {${\scriptscriptstyle 0}$};
\node at (3.6,0) [] {${\scriptscriptstyle 1}$};
\node at (3.6,-0.5) [] {${\scriptscriptstyle 2}$};

\node at (4.2,0.5) [] {${\small x_1^3x_2}$};
\node at (5.2,0.5) [] {${\small x_1x_2^3}$};

\node at (4.7,0) [] {${*}$};
\node at (5.7,0) [] {${*}$};

\node at (5.7,-0.5) [] {${*}$};

\draw [thick] (4,0) --(4.5,0);
\draw [thick] (5,0) --(5.5,0);

\draw [thick] (4,-0.5) --(4.5,-0.5);
\draw [thick] (5,-0.5) --(5.5,-0.5);

\end{tikzpicture}
\end{center}
\end{minipage}
\hspace{0.1cm}
\begin{minipage}{8cm}
Then $M_J(x_1^3x_2,U)=\{x_1	\}$,  $M_J(x_1x_2^3,U)=\{x_1,x_2	\}$ and 
$x_1^3x_2^2$ does not belong neither to $C_J(x_1^3x_2,U)$ nor to $C_J(x_1x_2^3,U).$
\end{minipage}

On the other hand, if $x_2<x_1$, we have

\begin{minipage}{4cm}
\begin{center}
\begin{tikzpicture}
\node at (3.6,0.5) [] {${\scriptscriptstyle 0}$};
\node at (3.6,0) [] {${\scriptscriptstyle 2}$};
\node at (3.6,-0.5) [] {${\scriptscriptstyle 1}$};
\node at (4.2,0.5) [] {${\small x_1x_2^3}$};
\node at (5.2,0.5) [] {${\small x_1^3x_2}$};

\node at (4.7,0) [] {${*}$};
\node at (5.7,0) [] {${*}$};

\node at (5.7,-0.5) [] {${*}$};

\draw [thick] (4,0) --(4.5,0);
\draw [thick] (5,0) --(5.5,0);

\draw [thick] (4,-0.5) --(4.5,-0.5);
\draw [thick] (5,-0.5) --(5.5,-0.5);

\end{tikzpicture}
\end{center}\end{minipage}
\hspace{0.1cm}
\begin{minipage}{8cm}
Thus $M_J(x_1x_2^3,U)=\{x_2\}$,  $M_J(x_1^3x_2,U)=\{x_1,x_2\}$ and $x_1^2x_2^3$ 
does not belong neither to $C_J(x_1^3x_2,U)$ nor to $C_J(x_1x_2^3,U).$\end{minipage}

\end{example}
Suppose now $\emptyset \neq Y =\{x_{j_1},...,x_{j_l}\}\subseteq X$; we start picking 
$x_{j_1}\in Y$ as maximal variable. We reorder the elements of $U$, increasingly w.r.t.
their $j_1$-degree; we pose the only condition $t<t'$ when $t\mid t'$
 for some $t,t' \in U$ with $deg_{j_1}(t)=deg_{j_1}(t')$.
Then, we write the corresponding $j_1$-bars $\cB^{(j_1)}_1,...,\cB^{(j_1)}_{\mu(j_1)}$ under the terms (one bar under
 each terms of some $j_1$-degree).
\begin{example}\label{running1}
Let us consider the set $U=\{x_1,x_1^2,x_2,x_1x_3\}\subset \ck[x_1,x_2,x_3]$; we first compute 
$D_1=\{0,1,2\}$, $D_2=D_2=\{0,1\}$. All the variables are good candidates for
being the maximal one. We pick, for example, $x_3$, so we have 
\begin{center}
\begin{tikzpicture}
\node at (3.6,0.5) [] {${\scriptscriptstyle 0}$};

\node at (3.6,-1) [] {${\scriptscriptstyle 3}$};
\node at (4.2,0.5) [] {${\small x_1}$};
\node at (5.2,0.5) [] {${\small x_1^2}$};
\node at (6.2,0.5) [] {${\small x_2}$};
\node at (7.2,0.5) [] {${\small x_1x_3}$};

\draw [thick] (4,-1) --(6.5,-1);
\draw [thick] (7,-1) --(7.5,-1);
\node at (7.7,-1) [] {${*}$};
\end{tikzpicture}
\end{center}
We remark that, we could also have picked another variables, obtaining a different Bar Code,
for example, picking $x_1$, we would have got:

\begin{center}
\begin{tikzpicture}
\node at (3.6,0.5) [] {${\scriptscriptstyle 0}$};

\node at (3.6,-1) [] {${\scriptscriptstyle 1}$};
\node at (4.2,0.5) [] {${\small x_2}$};
\node at (5.2,0.5) [] {${\small x_1}$};
\node at (6.2,0.5) [] {${\small x_1x_3}$};
\node at (7.2,0.5) [] {${\small x_1^2}$};

\draw [thick] (4,-1) --(4.5,-1);
\draw [thick] (5,-1) --(6.5,-1);
\draw [thick] (7,-1) --(7.5,-1);
\node at (7.7,-1) [] {${*}$};
\end{tikzpicture}
\end{center}
\end{example}
\noindent Now, with the routine Friends, we look for candidate terms for having 
condition $2$ of Corollary \ref{CorollCompl} satisfied, so for $1 \leq j < \mu(j_1)$, we fix 
$\cB^{(j_1)}_j$ and, for each $t$ over $\cB^{(j_1)}_j$ we define the set 
 $U(t,x_j)=\{(u,\alpha) \vert u \in B^{(j_1)}_{j+1} \textrm{ and } \alpha : 
 tx_j=um,\, m \in \kT[\alpha]\}$ of the candidate involutive divisors for 
 $tx_{j_1}$ (notice that $x_{j_1}$ is the maximal variable, so 
 $x_{j_1}\in M_J(v,U) \Leftrightarrow v$ lies over $\cB^{(j_1)}_{\mu(j_1)}$ by Proposition
 \ref{VMolt}).
 If exists $1 \leq j < \mu(j_1)$ and $\exists t' $ over $\cB^{(j_1)}_j$ with 
 $U(t',x_{j_1})=\emptyset$, then $x_{j_1}$ is not a good candidate for being the maximal variable,
so we come back to $Y$ and we start again with a new maximal variable.
\begin{example}\label{running2}
Coming back to example \ref{running1}, we have 
$U(x_1,x_3)=\{(x_1x_3,\emptyset)\}$, $U(x_1^2,x_3)=\{(x_1x_3,\{x_1\})\}$ and 
$U(x_2,x_3)=\emptyset$,
so $x_3$ was a bad choice for being the maximal variable and we try with $x_1$,
getting 

\begin{center}
\begin{tikzpicture}
\node at (3.6,0.5) [] {${\scriptscriptstyle 0}$};
\node at (3.6,-1) [] {${\scriptscriptstyle 1}$};
\node at (4.2,0.5) [] {${\small x_2}$};
\node at (5.2,0.5) [] {${\small x_1}$};
\node at (6.2,0.5) [] {${\small x_1x_3}$};
\node at (7.2,0.5) [] {${\small x_1^2}$};

\draw [thick] (4,-1) --(4.5,-1);
\draw [thick] (5,-1) --(6.5,-1);
\draw [thick] (7,-1) --(7.5,-1);
\node at (7.7,-1) [] {${*}$};
\end{tikzpicture}
\end{center}
Now, $U(x_2,x_1)=\{(x_1,\{x_2\})\}$, 
$U(x_1,x_1)=\{(x_1^2,\emptyset)\}$
and $U(x_1x_3,x_1)=\{(x_1^2, \{x_3\})\}$,
so $x_1$, at least for now, is a good choice for the maximal variable.
\end{example}
Suppose to be in the non-failure case; if for  $1 \leq j \leq \mu(j_1)$
there is only one term over $\cB^{(j_1)}_j$, all the bars are \emph{unitary} so we say 
that we are in the \emph{unitary case}. In this case, each variable ordering s.t. $x_{j_1}$
is the maximal variable makes $U$ a complete set of terms.
Indeed, in this case, for each choice of the following 
variables, their corresponding bars will be unitary again and, by the construction 
of the stars, all of them will be followed by a star. In other words, 
for each $t \in U$, and for each $x_j \neq x_{j_1}$, $x_j \in M_J(t,U)$. Moreover
$\forall t \in U$, $\vert U(t,x_{j_1})\vert =1$, so let $(u, \alpha)$ be the only element
of $U(t,x_{j_1})$, then all variables in $\alpha$ differ from $x_{j_1}$, 
so they are multiplicative for 
$u$ and this makes $u$ the required involutive divisor of $x_{j_1}t$, ensuring the completeness 
of $U$.
\begin{example}\label{unitary}
For $U=\{x^3,xy,y^2\} \subset \ck[x,y]$, $D_1=\{0,1,3\}$ and $D_2=\{0,1,2\}$, so $Y=\{y\}$.
Chosing $y$ as maximal variable we have:

\begin{center}
\begin{tikzpicture}
\node at (3.6,0.5) [] {${\scriptscriptstyle 0}$};
\node at (3.6,0) [] {${\scriptscriptstyle 1}$};
\node at (3.6,-0.5) [] {${\scriptscriptstyle 2}$};
\node at (4.2,0.5) [] {${\small x^3}$};
\node at (5.2,0.5) [] {${\small xy}$};
\node at (6.2,0.5) [] {${\small y^2}$};

\draw [thick] (4,-0.5) --(4.5,-0.5);
\draw [thick] (5,-0.5) --(5.5,-0.5);
\draw [thick] (6,-0.5) --(6.5,-0.5);

\end{tikzpicture}
\end{center}
All the $2$-bars are unitary, so, completing the Bar Code we get 

\begin{minipage}{4.5cm}
\begin{center}
\begin{tikzpicture}
\node at (3.6,0.5) [] {${\scriptscriptstyle 0}$};
\node at (3.6,0) [] {${\scriptscriptstyle 1}$};
\node at (3.6,-0.5) [] {${\scriptscriptstyle 2}$};
\node at (4.2,0.5) [] {${\small x^3}$};
\node at (5.2,0.5) [] {${\small xy}$};
\node at (6.2,0.5) [] {${\small y^2}$};
\node at (4.7,0) [] {${*}$};
\node at (5.7,0) [] {${*}$};
\node at (6.7,0) [] {${*}$};
\node at (6.7,-0.5) [] {${*}$};

\draw [thick] (4,0) --(4.5,0);
\draw [thick] (5,0) --(5.5,0);
\draw [thick] (6,0) --(6.5,0);

\draw [thick] (4,-0.5) --(4.5,-0.5);
\draw [thick] (5,-0.5) --(5.5,-0.5);
\draw [thick] (6,-0.5) --(6.5,-0.5);

\end{tikzpicture}
\end{center}
\end{minipage}
\hspace{0.1cm}
\begin{minipage}{7cm}

We can easily read from the
diagram that $(x^3)\cdot y \in C_J(xy,U)$ and $(xy)\cdot y \in C_J(y^2,U)$, 
making $U$ complete.\end{minipage}

\end{example}
 
If we are not in the unitary case, we have to choose the next variable and continue drawing
the Bar Code, using the routine Common.
\\
To get the candidates for being the next variable, we execute the procedure CandidateVar 
to each $j_1$-bar and (procedure Candidate) we intersect the results. If the intersection is empty 
then $x_{j_1}$ was not a good choice for being the maximal variable and we have to come back
and repeat the whole procedure for another maximal variable.
Otherwise, we choose some $x_{j_2}$ among the variables in the intersection, and for each 
$1 \leq j \leq \mu(j_1)$, we order the terms over $\cB^{(j_1)}_j$ exactly as 
done for constructing the $j_1$-bars and we draw all the $j_2$-bars.
Employing again the routine Friends, separately for each $j_1$-bar,
we look for candidate involutive divisors when $x_{j_2}$ is not multiplicative.
Moreover, we check whether the choice of $x_{j_2}$ is suitable to the candidates
found in the previous step. Indeed, for each $t$ over $\cB^{(j_1)}_j$, $1 \leq j< \mu(j_1)$,
we have constructed a set of candidates $U(t,x_{j_1})$. Given 
$(u, \alpha) \in U(t,x_{j_1})$, if $x_{j_2} \notin \alpha$, then the multiplicativity of 
$x_{j_2}$ is irrelevant for $u$, so $(u,\alpha)$ still remains a good candidate 
for being an involutive divisor. It is still a good candidate also if  
$x_{j_2} \in \alpha$ and the $j_2$-bar of $u$ is in one of the conditions for being followed 
by a star (see section \ref{BCsect}), since it means that $x_{j_2} $ is multiplicative for $u$.
Otherwise we remove $(u, \alpha)$ from the candidates. If for some $t$ its candidate list is empty
we have to revoke the choice of $x_{j_2}$ and come back with another candidate.
\\    
If the procedure gives a positive outcome, then a new variable has been chosen 
and the routine Common keeps calling itself until
\begin{itemize}
 \item  all variables have been placed (positive outcome)
 \item the unitary case is reached (positive outcome)
 \item  continue revocations of choices lead to failure (negative outcome).
\end{itemize}
\begin{example}\label{running3}
 We conclude now examples \ref{running1} and \ref{running2}. From 
 \begin{center}
\begin{tikzpicture}
\node at (3.6,0.5) [] {${\scriptscriptstyle 0}$};
\node at (3.6,-1) [] {${\scriptscriptstyle 1}$};
\node at (4.2,0.5) [] {${\small x_2}$};
\node at (5.2,0.5) [] {${\small x_1}$};
\node at (6.2,0.5) [] {${\small x_1x_3}$};
\node at (7.2,0.5) [] {${\small x_1^2}$};

\draw [thick] (4,-1) --(4.5,-1);
\draw [thick] (5,-1) --(6.5,-1);
\draw [thick] (7,-1) --(7.5,-1);
\node at (7.7,-1) [] {${*}$};
\end{tikzpicture}
\end{center}
we choose now $x_2$ as following variable and we get 
\begin{center}
\begin{tikzpicture}
\node at (3.6,0.5) [] {${\scriptscriptstyle 0}$};
 \node at (3.6,-0.5) [] {${\scriptscriptstyle 2}$};
\node at (3.6,-1) [] {${\scriptscriptstyle 1}$};
\node at (4.2,0.5) [] {${\small x_2}$};
\node at (5.2,0.5) [] {${\small x_1}$};
\node at (6.2,0.5) [] {${\small x_1x_3}$};
\node at (7.2,0.5) [] {${\small x_1^2}$};

\draw [thick] (4,-0.5) --(4.5,-0.5);
\draw [thick] (5,-0.5) --(6.5,-0.5);
\draw [thick] (7,-0.5) --(7.5,-0.5);
\node at (4.7,-0.5) [] {${*}$};
\node at (6.7,-0.5) [] {${*}$};

\node at (7.7,-0.5) [] {${*}$};

\draw [thick] (4,-1) --(4.5,-1);
\draw [thick] (5,-1) --(6.5,-1);
\draw [thick] (7,-1) --(7.5,-1);
\node at (7.7,-1) [] {${*}$};
\end{tikzpicture}
\end{center}
Since $x_2$ is multiplicative for all terms, and since 
over each $x_1$-bar there is only one $x_2$-bar, Friends gives a positive 
outcome.  Finally choosing $x_3$, we get 

\begin{center}
\begin{tikzpicture}
\node at (3.6,0.5) [] {${\scriptscriptstyle 0}$};
 \node at (3.6,0) [] {${\scriptscriptstyle 3}$};
\node at (3.6,-0.5) [] {${\scriptscriptstyle 2}$};
\node at (3.6,-1) [] {${\scriptscriptstyle 1}$};
\node at (4.2,0.5) [] {${\small x_2}$};
\node at (5.2,0.5) [] {${\small x_1}$};
\node at (6.2,0.5) [] {${\small x_1x_3}$};
\node at (7.2,0.5) [] {${\small x_1^2}$};
\draw [thick] (4,0) --(4.5,0);
\draw [thick] (5,0) --(5.5,0);
\draw [thick] (6,0) --(6.5,0);
\draw [thick] (7,0) --(7.5,0);
\node at (4.7,0) [] {${*}$};
\node at (6.7,0) [] {${*}$};

\node at (7.7,0) [] {${*}$};

\draw [thick] (4,-0.5) --(4.5,-0.5);
\draw [thick] (5,-0.5) --(6.5,-0.5);
\draw [thick] (7,-0.5) --(7.5,-0.5);
\node at (4.7,-0.5) [] {${*}$};
\node at (6.7,-0.5) [] {${*}$};

\node at (7.7,-0.5) [] {${*}$};

\draw [thick] (4,-1) --(4.5,-1);
\draw [thick] (5,-1) --(6.5,-1);
\draw [thick] (7,-1) --(7.5,-1);
\node at (7.7,-1) [] {${*}$};
\end{tikzpicture}
\end{center}
Now, $x_3$ is multiplicative for $x_1^2$ as required by $U(x_1x_3,x_1)$ and 
we have $U(x_1,x_3)=\{(x_1x_3,\emptyset)\}$, so $U$ turns out to be complete with 
the variables' ordering $x_3<x_2<x_1$.
\\
We point out that this is not the only ordering making $U$ complete, in particular, for 
$x_1<x_3<x_2$ $U$ is complete again:

\begin{center}
\begin{tikzpicture}
\node at (3.6,0.5) [] {${\scriptscriptstyle 0}$};
 \node at (3.6,0) [] {${\scriptscriptstyle 1}$};
\node at (3.6,-0.5) [] {${\scriptscriptstyle 3}$};
\node at (3.6,-1) [] {${\scriptscriptstyle 2}$};
\node at (4.2,0.5) [] {${\small x_1}$};
\node at (5.2,0.5) [] {${\small x_1^2}$};
\node at (6.2,0.5) [] {${\small x_1x_3}$};
\node at (7.2,0.5) [] {${\small x_2}$};
\draw [thick] (4,0) --(4.5,0);
\draw [thick] (5,0) --(5.5,0);
\draw [thick] (6,0) --(6.5,0);
\draw [thick] (7,0) --(7.5,0);
\node at (5.7,0) [] {${*}$};
\node at (6.7,0) [] {${*}$};

\node at (7.7,0) [] {${*}$};

\draw [thick] (4,-0.5) --(5.5,-0.5);
\draw [thick] (6,-0.5) --(6.5,-0.5);
\draw [thick] (7,-0.5) --(7.5,-0.5);
\node at (6.7,-0.5) [] {${*}$};

\node at (7.7,-0.5) [] {${*}$};

\draw [thick] (4,-1) --(6.5,-1);
\draw [thick] (7,-1) --(7.5,-1);
\node at (7.7,-1) [] {${*}$};
\end{tikzpicture}
\end{center}
Indeed 
\begin{itemize}
 \item $M_J(x_1,U)=\emptyset$, $NM_J(x_1,U)=\{x_1,x_2,x_3\}$, with
 $x_1^2\in C_J(x_1^2,U)$, $x_1x_2 \in C_J(x_2,U)$, $x_1x_3 \in C_J(x_1x_3,U)$ ;
 \item $M_J(x_1^2,U)=\{x_1\}$, $NM_J(x_1^2,U)=\{x_2,x_3\}$, 
with $x_1^2x_2 \in C_J(x_2,U)$, $x_1^2x_3 \in C_J(x_1x_3,U) $;
\item $M_J(x_1x_3,U)=\{x_1,x_3\}$, $NM_J(x_1x_3,U)=\{x_2\}$, with $x_1x_2x_3 \in C_J(x_2,U)$; 
 \item $M_J(x_2,U)=\{x_1,x_2,x_3\}$, $NM_J(x_2,U)=\emptyset$.
\end{itemize}
\end{example}

The pseudocode of all mentioned routines is displayed in Appendix \ref{Codice}. We see now a complete example for the whole procedure. 
\begin{example}\label{Esempione}
Consider the set 
$$M=\{x_2x_3,x_1^2,x_3^2, 
x_2^2,x_1x_2,x_1x_2x_4,x_1^2x_4,x_4x_3,x_2^2x_4,x_1^2x_3\}\subset 
\mathbf{k}[x_1,x_2,x_3,x_4].$$
First, we compute $D_1=D_2=D_3=\{0,1,2\}$, $D_4=\{0,1\}$, desuming that 
each variable is a good candidate for being the maximal one, so $Y=\{x_1,x_2,x_3,x_4\}$.
We choose, for example, $x_3$, getting
\begin{center}
\begin{tikzpicture}
\node at (4.2,0) [] {${\small x_1^2}$};
\node at (5.2,0) [] {${\small x_1x_2}$};
\node at (6.2,0) [] {${\small x_2^2}$};
\node at (7.2,0) [] {${\small x_1^2x_4}$};
\node at (8.2,0) [] {${\small x_1x_2x_4}$};
\node at (9.2,0) [] {${\small x_2^2x_4}$};
\node at (10.2,0) [] {${\small x_1^2x_3}$};
\node at (11.2,0) [] {${\small x_2x_3}$};
\node at (12.2,0) [] {${\small x_4x_3}$};
\node at (13.2,0) [] {${\small x_3^2}$};

\node at (3.6,-2) [] {${\scriptscriptstyle 3}$};
\draw [thick] (4,-2) --(9.5,-2);
\draw [thick] (10,-2) --(12.5,-2);
\draw [thick] (13,-2) --(13.5,-2);
\node at (13.7,-2) [] {${*}$};

\end{tikzpicture}
\end{center}
Now, running Friends for the first time, we get

\begin{minipage}{8.5cm}
$\bullet$ $U(x_1^2, x_3)=\{(x_1^2x_3,\emptyset)\}$;\\
$\bullet$ $U(x_1x_2, x_3 )=\{(x_2x_3,\{x_1\})\}$;\\
$\bullet$ $U(x_2^2, x_3 )=\{(x_2x_3,\{x_2\})\}$;\\
 $\bullet$ $U(x_1^2x_4, x_3 )=\{( x_1^2x_3,\{x_4\}),( x_3x_4,\{x_1\})\}$;\\  
$\bullet$ $U(x_1x_2x_4, x_3)=\{( x_2x_3,\{x_1,x_4\}),(x_3x_4,\{x_1,x_2\})\}$;\\
$\bullet$ $U(x_2^2x_4, x_3 )=\{(x_2x_3,\{x_2,x_4\}),( x_3x_4,\{x_2\})\}$;
 \end{minipage}
\hspace{0.1cm}
\begin{minipage}{6cm}
$\bullet$ $U(x_1^2x_3, x_3 )=\{(x_3^2,\{x_1 \})\}$;\\
 $\bullet$ $U(x_2x_3, x_3   )=\{(x_3^2,\{ x_2\})\}$;\\
$\bullet$ $U(x_3x_4, x_3 )=\{(x_3^2,\{x_4\})\}$.
\end{minipage}

The procedure gives a positive outcome, so, since we are not in the unitary case, we 
apply Common.  All the variables are good candidates for being the second in order of magnitude 
and, for example, we choose $x_4$, getting:
\begin{center}
\begin{tikzpicture}

\node at (4.2,0) [] {${\small x_1^2}$};
\node at (5.2,0) [] {${\small x_1x_2}$};
\node at (6.2,0) [] {${\small x_2^2}$};
\node at (7.2,0) [] {${\small x_1^2x_4}$};
\node at (8.2,0) [] {${\small x_1x_2x_4}$};
\node at (9.2,0) [] {${\small x_2^2x_4}$};
\node at (10.2,0) [] {${\small x_1^2x_3}$};
\node at (11.2,0) [] {${\small x_2x_3}$};
\node at (12.2,0) [] {${\small x_3x_4}$};
\node at (13.2,0) [] {${\small x_3^2}$};

 \draw [thick] (4,-1.5)--(6.5,-1.5);
 \draw [thick] (7,-1.5)--(9.5,-1.5);
\draw [thick] (10,-1.5)--(11.5,-1.5);
\draw [thick] (12,-1.5)--(12.5,-1.5);
\draw [thick] (13,-1.5)--(13.5,-1.5);

\draw [thick] (4,-2) --(9.5,-2);
\draw [thick] (10,-2) --(12.5,-2);
\draw [thick] (13,-2) --(13.5,-2);

\node at (3.6,-1.5) [] {${\scriptscriptstyle 4}$};
\node at (3.6,-2) [] {${\scriptscriptstyle 3}$};

\node at (9.7,-1.5) [] {${*}$};
\node at (12.7,-1.5) [] {${*}$};
\node at (13.7,-1.5) [] {${*}$};

\node at (13.7,-2) [] {${*}$};

\end{tikzpicture}
\end{center}
We have:

\begin{minipage}{7cm}
\begin{itemize}
 \item $U(x_1^2,x_4)=\{(x_1^2x_4,\emptyset),(x_4, \{x_1\})\}$;
 \item $U(x_1x_2,x_4)=\{(x_1x_2x_4,\emptyset)\}$;
 \end{itemize}
 \end{minipage}
 \hspace{0.1cm}
 \begin{minipage}{6cm}
 \begin{itemize}
   \item $U(x_2^2,x_4)=\{(x_2^2x_4,\emptyset)\}$;
 \item $U(x_2,x_4)=\{(x_4, \{x_2\})\}$.
\end{itemize} \end{minipage}

We check that the choice of $x_4$ is suitable for the condition imposed 
in the previous step
\begin{itemize}
 \item for $U(x_1^2x_4, x_3 )=\{( x_1^2x_3,\{x_4\}),( x_3x_4,\{x_1\})\}$
 e notice that $x_1^2x_3$ does not lie on the rightmost $4$-bar, so $x_4$ is 
not multiplicative. Since we have more than one term associated to $x_1^2x_4$, 
we only delete $x_1^2x_3$ and keep $x_3x_4.$
\\
The same argument holds for  $x_1x_2x_4,x_2^2x_4$.
 \item For $U(x_3x_4, x_3 )=\{(x_3^2,\{x_4\})\}$, since  $x_3^2$ lies on the rightmost $4$-bar,
 $x_3^2$ passes the test, remaining a good candidate for being an involutive divisor.
\end{itemize}
So we have

\begin{minipage}{6.5cm}
$\bullet$ $U(x_1^2, x_3)=\{(x_1^2x_3,\emptyset)\}$;\\
$\bullet$  $U(x_1x_2, x_3 )=\{(x_2x_3,\{x_1\})\}$;\\
$\bullet$ $U(x_2^2, x_3 )=\{(x_2x_3,\{x_2\})\}$;\\
$\bullet$  $U(x_1^2x_4, x_3 )=\{( x_3x_4,\{x_1\})\}$;\\  
$\bullet$ $U(x_1x_2x_4, x_3)=\{ (x_3x_4,\{x_1,x_2\})\}$;\\
$\bullet$ $U(x_2^2x_4, x_3 )=\{( x_3x_4,\{x_2\})\}$;\\
$\bullet$  $U(x_1^2x_3, x_3 )=\{(x_3^2,\{x_1 \})\}$;
 \end{minipage}
 \hspace{0.1cm}
 \begin{minipage}{7cm}
 $\bullet$  $U(x_2x_3, x_3   )=\{(x_3^2,\{ x_2\})\}$;\\
$\bullet$ $U(x_3x_4, x_3 )=\{(x_3^2,\{x_4\})\}$;\\
$\bullet$  $U(x_1^2,x_4)=\{(x_1^2x_4,\emptyset),(x_4, \{x_1\})\}$;\\
$\bullet$  $U(x_1x_2,x_4)=\{(x_1x_2x_4,\emptyset)\}$;\\
$\bullet$  $U(x_2^2,x_4)=\{(x_2^2x_4,\emptyset)\}$;\\
 $\bullet$  $U(x_2,x_4)=\{(x_4, \{x_2\})\}$.
\end{minipage}

We continue  choosing $x_2$ as next variable
 and we get:
\begin{center}
\begin{tikzpicture}
\node at (4.2,0) [] {${\small x_1^2}$};
\node at (5.2,0) [] {${\small x_1x_2}$};
\node at (6.2,0) [] {${\small x_2^2}$};
\node at (7.2,0) [] {${\small x_1^2x_4}$};
\node at (8.2,0) [] {${\small x_1x_2x_4}$};
\node at (9.2,0) [] {${\small x_2^2x_4}$};
\node at (10.2,0) [] {${\small x_1^2x_3}$};
\node at (11.2,0) [] {${\small x_2x_3}$};
\node at (12.2,0) [] {${\small x_3x_4}$};
\node at (13.2,0) [] {${\small x_3^2}$};

 \draw [thick] (4,-1)--(4.5,-1);
 \draw [thick] (5,-1)--(5.5,-1);
 \draw [thick] (6,-1)--(6.5,-1);
 \draw [thick] (7,-1)--(7.5,-1);
 \draw [thick] (8,-1)--(8.5,-1);
 \draw [thick] (9,-1)--(9.5,-1);
 \draw [thick] (10,-1)--(10.5,-1);
 \draw [thick] (11,-1)--(11.5,-1);
 \draw [thick] (12,-1)--(12.5,-1);
 \draw [thick] (13,-1)--(13.5,-1);

 \draw [thick] (4,-1.5)--(6.5,-1.5);
 \draw [thick] (7,-1.5)--(9.5,-1.5);
\draw [thick] (10,-1.5)--(11.5,-1.5);
\draw [thick] (12,-1.5)--(12.5,-1.5);
\draw [thick] (13,-1.5)--(13.5,-1.5);

\draw [thick] (4,-2) --(9.5,-2);
\draw [thick] (10,-2) --(12.5,-2);
\draw [thick] (13,-2) --(13.5,-2);

\node at (3.6,-1) [] {${\scriptscriptstyle 2}$};

\node at (3.6,-1.5) [] {${\scriptscriptstyle 4}$};
\node at (3.6,-2) [] {${\scriptscriptstyle 3}$};

\node at (9.7,-1) [] {${*}$};
\node at (12.7,-1) [] {${*}$};
\node at (13.7,-1) [] {${*}$};
\node at (11.7,-1) [] {${*}$};
\node at (6.7,-1) [] {${*}$};

\node at (9.7,-1.5) [] {${*}$};
\node at (12.7,-1.5) [] {${*}$};
\node at (13.7,-1.5) [] {${*}$};
\node at (13.7,-2) [] {${*}$};

\end{tikzpicture}
\end{center}
This way, all the $2$-bars are unitary. We check on the 
$2$-bars to have nonincreasing exponents for $x_1$ and this is true. Moreover, 
we check that $x_2$ is multiplicative where it is marked, i.e. for 
$x_2x_3,x_3x_4$ but it clearly holds.
The set $M$ is complete for  $x_1<x_2<x_4<x_3$ and its final Bar Code w.r.t. 
the chosen ordering is
\begin{center}
\begin{tikzpicture}
\node at (4.2,0) [] {${\small x_1^2}$};
\node at (5.2,0) [] {${\small x_1x_2}$};
\node at (6.2,0) [] {${\small x_2^2}$};
\node at (7.2,0) [] {${\small x_1^2x_4}$};
\node at (8.2,0) [] {${\small x_1x_2x_4}$};
\node at (9.2,0) [] {${\small x_2^2x_4}$};
\node at (10.2,0) [] {${\small x_1^2x_3}$};
\node at (11.2,0) [] {${\small x_2x_3}$};
\node at (12.2,0) [] {${\small x_3x_4}$};
\node at (13.2,0) [] {${\small x_3^2}$};

\draw [thick] (4,-0.5)--(4.5,-0.5);
 \draw [thick] (5,-0.5)--(5.5,-0.5);
 \draw [thick] (6,-0.5)--(6.5,-0.5);
 \draw [thick] (7,-0.5)--(7.5,-0.5);
 \draw [thick] (8,-0.5)--(8.5,-0.5);
 \draw [thick] (9,-0.5)--(9.5,-0.5);
 \draw [thick] (10,-0.5)--(10.5,-0.5);
 \draw [thick] (11,-0.5)--(11.5,-0.5);
 \draw [thick] (12,-0.5)--(12.5,-0.5);
 \draw [thick] (13,-0.5)--(13.5,-0.5);

\draw [thick] (4,-1)--(4.5,-1);
 \draw [thick] (5,-1)--(5.5,-1);
 \draw [thick] (6,-1)--(6.5,-1);
 \draw [thick] (7,-1)--(7.5,-1);
 \draw [thick] (8,-1)--(8.5,-1);
 \draw [thick] (9,-1)--(9.5,-1);
 \draw [thick] (10,-1)--(10.5,-1);
 \draw [thick] (11,-1)--(11.5,-1);
 \draw [thick] (12,-1)--(12.5,-1);
 \draw [thick] (13,-1)--(13.5,-1);

 \draw [thick] (4,-1.5)--(6.5,-1.5);
 \draw [thick] (7,-1.5)--(9.5,-1.5);
\draw [thick] (10,-1.5)--(11.5,-1.5);
\draw [thick] (12,-1.5)--(12.5,-1.5);
\draw [thick] (13,-1.5)--(13.5,-1.5);

\draw [thick] (4,-2) --(9.5,-2);
\draw [thick] (10,-2) --(12.5,-2);
\draw [thick] (13,-2) --(13.5,-2);

\node at (3.6,-0.5) [] {${\scriptscriptstyle 1}$};

\node at (3.6,-1) [] {${\scriptscriptstyle 2}$};

\node at (3.6,-1.5) [] {${\scriptscriptstyle 4}$};
\node at (3.6,-2) [] {${\scriptscriptstyle 3}$};

\node at (9.7,-0.5) [] {${*}$};
\node at (12.7,-0.5) [] {${*}$};
\node at (13.7,-0.5) [] {${*}$};
\node at (11.7,-0.5) [] {${*}$};
\node at (6.7,-0.5) [] {${*}$};

\node at (4.7,-0.5) [] {${*}$};
\node at (5.7,-0.5) [] {${*}$};
\node at (7.7,-0.5) [] {${*}$};
\node at (8.7,-0.5) [] {${*}$};
\node at (10.7,-0.5) [] {${*}$};

\node at (9.7,-1) [] {${*}$};
\node at (12.7,-1) [] {${*}$};
\node at (13.7,-1) [] {${*}$};
\node at (11.7,-1) [] {${*}$};
\node at (6.7,-1) [] {${*}$};

\node at (9.7,-1.5) [] {${*}$};
\node at (12.7,-1.5) [] {${*}$};
\node at (13.7,-1.5) [] {${*}$};
\node at (13.7,-2) [] {${*}$};
\end{tikzpicture}
\end{center}

\end{example}

\appendix
\section{Pseudocode of all procedures}\label{Codice}
\begin{algorithm}
\caption{Procedure to generate the candidate list for the current maximal variable (subroutine).}
\begin{algorithmic}[H]\label{candidate1}
\Procedure{CandidateVar}{$M,C$} \Comment{$M$ is a set of terms; $C$ is a set of variables.}
\State {\bf for }$i=1,..., \vert C \vert$ {\bf do}
\State $D_i:=\{\beta \in \NN \vert \exists t \in M,\, \deg_{C[i]}(t)=\beta\}$
\State {\bf end for}
\If{for some $\gamma_1 \in D_i$, $\gamma_1< \max(D_i)$, $\gamma_1+1  
\notin D_i$}
\State Delete $C[i]$ from $C$
\EndIf
\Return $C$\;
\EndProcedure
\end{algorithmic}
\end{algorithm}
\vspace{-0.5cm}
\begin{algorithm}
\caption{Procedure to generate the candidate list for the current maximal
variable.}
\begin{algorithmic}[H]\label{candidate1}
\Procedure{Candidates}{$M,C$} \Comment{$M$ is a list of lists of terms; $C$ is a
set of variables.}
\State {\bf for }$i=1,..., \vert M \vert$ {\bf do}
\State $Y[i]:=$CandidateVar($M[i],C$);
 \State {\bf end for}
\State {\bf return} $\bigcap_{i}Y[i]$\;
\EndProcedure
\end{algorithmic}
\end{algorithm}
\newpage
\begin{algorithm}
\caption{Friends}\label{friends}
\begin{algorithmic}[H]
\Procedure{Friends}{$A,Y,x_j,T$} \Comment{$T$ is the output of a previous execution of Friends (or it is empty), so it is formed by sets of the form $T(t,x_j)$, $t$ terms in the given set, and $x_j$ variables.}
\State {\bf for} $B \in A$ {\bf do}
\State $B'=$NextB$(B,A)$ \Comment{Subroutine taking as input a bar $B$ and the (partial) Bar Code $A$ containing it and giving as output the bar in $A$ that is just on the right of $B$ or error if there is not such bar.}
\State {\bf for} $t \in B$ {\bf do} 
  \State $U(t,x_j)=\{(u,\alpha) \vert u \in B' \textrm{ and } \alpha : tx_j=um,\, m \in \kT[\alpha]\}$
\State {\bf end for}  
\If{$U(t,x_j)=\emptyset$} {\bf return} fail
\EndIf
\State {\bf end for} 
\If {$T \neq \emptyset$} 
\State {\bf for} $B \in A$   {\bf do}
\State {\bf for} $t \in B$  {\bf do}
\State {\bf for} $y \in X \setminus Y$  {\bf do}
\State $U(t,y)=\emptyset$
\State {\bf for} $(U,\alpha) \in T(t,y)$  {\bf do}
\If{$x_j \notin \alpha$}
\State $U(t,y)=\{(U, \alpha)\}\cup U(t,y)$
\Else
\If{$x_j \in \alpha$ and Star$(x_j,t)=$true }
\State  $U(t,y)=\{(U, \alpha)\}\cup U(t,y)$
\EndIf
\EndIf
\State {\bf end for}
\If{$U(t,y)=\emptyset$}
\State {\bf return} fail
\EndIf
\State {\bf end for}
\State {\bf end for}
\State {\bf end for}
\EndIf
\State {\bf return} $U$  
\EndProcedure
\end{algorithmic}
\end{algorithm}
\newpage
\begin{algorithm}
\caption{Common}\label{common}
\begin{algorithmic}[H]
\Procedure{Common}{$A,X,x_i,T$} \Comment{$T$ is the output of a previous execution of Friends (or it is empty), so it is formed by sets of the form $T(t,x_j)$, $t$ terms in the given set, and $x_j$ variables.}
\State $Y=X\setminus \{x_i\}$
\State $Y'=$Candidate$(A,Y)$
\If {$\vert Y' \vert =0$}  {\bf return } $\emptyset$
\EndIf
\State {\bf for} $x_j \in Y'$ {\bf do}
\State {\bf for} $B \in A$ {\bf do}
\State construct $B^j=\{B^{(j}\}$;
 $C=\{B^{(j)} \vert B \in A\}$;
 $U=$Friends$(C,Y,x_j,T)$
\If {$U=$ failure} {\bf continue } 
\EndIf
\If{ $\vert B^{(j)} \vert=1,\,\forall B^{(j)}\in C $ }
 $ord=Y\cup \{x_j\}$
 {\bf return} $ord$
\EndIf
\If{ $Y \neq \emptyset$ }
\State $C=$Common$(C,Y,x_j,U)$
\Else
\If{$Y=\emptyset$} $ord=ord\cup\{x_j\}$
\State {\bf return} ord
\EndIf
\If{ $ord\neq \emptyset$}  $ord=ord \cup \{x_j\}$
\Else $\quad$ {\bf continue}
\EndIf
\EndIf
\State {\bf end for}
\State {\bf end for}
\State {\bf return} $\emptyset$  
\EndProcedure
\end{algorithmic}
\end{algorithm}
\begin{algorithm}
\caption{Ordering}\label{ordering}
\begin{algorithmic}[H]
\Procedure{Ordering}{$M,X$} \Comment{$M$ is a given set of terms; $X$ is the set of all variables}
\State $Y= $Candidate$(M,X)$
\State  {\bf for} $x_i \in Y$ {\bf do}
\State $A=\{A^{(i)}\}$ \Comment{The bars according to $\deg_i$ of the Bar Code we construct.}
\State $T=$Friends$(A,X,x_i,\emptyset)$
\If{ $T=$ failure}
\State {\bf continue}
\EndIf
\If{ $\vert A^{(i)} \vert=1,\,\forall A^{(i)}\in A $ } \Comment{Unitary case.}
\State $ord=\textrm{Append}(X\setminus \{x_i\}, x_i)$ \Comment{We append $x_i$ to the set $X\setminus \{x_i\}$.}
\State {\bf return} $ord$
\EndIf
\State $C=$Common$(A,X,x_i,T)$
\State {\bf return} the variable ordering.
\If{ $C \neq \emptyset$ }
\State $ord=C \cup \{x_i\}$
\State {\bf return} $ord$
\Else $\quad$ {\bf continue}
\EndIf
\State {\bf end for}
\State {\bf return} $\emptyset$
\EndProcedure
\end{algorithmic}
\end{algorithm}

\begin{thebibliography}{10}


\bibitem{A} Apel, J., \emph{The theory of involutive divisions and an application to Hilbert function computations.} Journ. Symb. Comp., 25(6), 683-704, 1998
 
\bibitem{Car1} 
Cartan E.
{\em Sur l'int\'egration des syst\`emes d'\'equations aux diff\'erentielles totals.}
Ann. \'Ec. Norm. $3^e$ s\'erie {\bf 18} 241, 1901.
 
\bibitem{Car2} 
Cartan E.
{\em Sur la structure des groupes infinis de transformations.}
Ann. \'Ec. Norm. $3^e$ s\'erie {\bf 21} 153, 1904.
 
\bibitem{Car3} 
Cartan E.
{\em Sur les syst\`emes en involution d'\'equations aux d\'eriv\'ees partielles du second ordre \`a une fonction inconnue de trois variables ind\'ependentes.}
Bull. Soc. Marth. {\bf 39} (1920) 356.
 


  \bibitem{Ce0} Ceria, M., \emph{A proof
of the "Axis of Evil theorem" for distinct
points}, Rendiconti del Seminario Matematico dell'Universit\`{a} e del Politecnico di Torino, 
Vol. 72 No. 3-4,  pp. 213-233 (2014)

\bibitem{Ce}  Ceria, M., \emph{Bar Code for monomial ideals},
	\texttt{DOI:https://doi.org/10.1016/j.jsc.2018.06.012}
	Journal of Symbolic Computation,
Vol 91, March - April 2019, 30-56
\bibitem{BCStr} Ceria, M., \emph{Bar code: a visual representation for finite set of
terms and its applications.}, preprint. 

\bibitem{NotaBasi} Ceria, M,. \emph{A variant of the iterative Moeller algorithm for giving Pommaret basis and its factorization
}, in preparation

\bibitem{BCVJT}  Ceria, M,. \emph{Bar Code versus Janet tree}, submitted.

\bibitem{CMCeMu} Ceria M., Mora T. \emph{Combinatorics of ideals of points: a Cerlienco-Mureddu-like approach for an iterative lex game}, submitted.

\bibitem{CMR} Ceria M.,  Mora T. and Roggero M., 
\emph{Term-ordering free 
involutive bases}, DOI:$10.1016/j.jsc.2014.09.005$, Journal of Symbolic Computation, Volume 68, Part 2, May--June 2015, Pages 87--108.
\bibitem{CeMu} Cerlienco L.,  Mureddu M., \emph{Algoritmi combinatori per 
l'interpolazione polinomiale in dimensione $\geq 2$}, preprint (1990).
\bibitem{CeMu2}  Cerlienco L.,  Mureddu M., \emph{From algebraic sets to monomial linear bases by means of combinatorial algorithms},  Discrete Math. $139$, $73-87$.
\bibitem{CeMu3}  Cerlienco L.,  Mureddu M., \emph{Multivariate Interpolation and Standard Bases
for Macaulay Modules}, J. Algebra $251$ $(2002)$, $686-726$.

\bibitem{Del1}
Delassus E.,
{\em Extension du th\'eor\`eme de Cauchy aux syst\`emes les plus g\'en\'eraux d'\'equations 
aux d\'eriv\'ees partielles}. Ann. \'Ec. Norm. 
$3^e$ s\'erie {\bf 13} (1896) 421--467
 
\bibitem{Del2}
Delassus E.,
{\em Sur les syst\`emes alg\'ebriques et leurs relations avec certains 
syst\`emes d'equations aux d\'eriv\'ees partielles}. Ann. \'Ec. Norm. 
$3^e$ s\'erie {\bf 14} (1897) 21--44
 
\bibitem{Del3}
Delassus E.,
{\em Sur les invariants des syst\`emes diff\'erentiels}. Ann. \'Ec. Norm. 
$3^e$ s\'erie 25  255--318, 1908
 
 \bibitem{Ei} Eisenbud D., \emph{Commutative Algebra: with a view toward algebraic geometry}, 150, Springer, 2013.
 \bibitem{FRR} Felszeghy B., R\'{a}th B.,  R\'{o}nyai  L.,      \emph{The lex game and some applications},
J. Symbolic Computation $41$ $(2006)$, $663-681$.

\bibitem{GAL} Galligo, A., \emph{A propos du
th\'{e}orem de pr\'{e}paration de Weierstrass}, L.
N. Math.40, Springer, 543--579, 1974.

 \bibitem{GB1}
Gerdt V.P., Blinkov Y.A. 
{\em Involutive bases of Polynomial Ideals}, 
Math. Comp. Sim. 45, 543--560, 1998
   \bibitem{GB2}
Gerdt V.P., Blinkov Y.A. 
{\em Minimal involutive bases}, 
Math. Comp.  Sim.  45,
519--541, 1998
\bibitem{GB3} 
Gerdt V.P., Blinkov Y.A. 
{\em Involutive Division Generated by an Antigraded Monomial Ordering}
L. N. Comp. Sci {\bf 6885} (2011), 
158-174, Springer




\bibitem{GBY}  Gerdt V., Blinkov Y. and Yanovich D.,
\emph{Construction of Janet Bases I. Monomial Bases},
in Computer Algebra
in Scientific Computing
CASC 2001, 233-247.
\bibitem{Gra} 
Grauert, H., \emph{{\"U}ber die Deformation isolierter Singularit\"aten analytischer Mengen}.  Inventiones mathematicae {\bf 15} (1971/72), 171-198
\bibitem{GS}
Green M.,  Stillman M., {\it A tutorial on generic initial ideals},  
in Buchberger B.,  Winkler F. (Eds.) {\it Gr\"obner Bases and Application} (1998)  
90--108  Cambridge Univ. Press
\bibitem{Gun1} Gunther, N., \emph{ Sur la forme canonique des syst\`emes d\'equations homog\`enes} (in russian)
[Journal de l'Institut des Ponts et Chauss\'ees de Russie]
Izdanie Inst. In\u z. Putej Soob\u s\u cenija Imp. Al. I. {\bf 84} (1913) .
 
\bibitem{Gun2}
Gunther, N.,   \emph{Sur la forme canonique des equations alg\'ebriques}
C.R. Acad. Sci. Paris {\bf 157} (1913),
577--80
 
   \bibitem{Gun3}
Gunther, N. 
{\em Sur les modules des formes alg\'ebriques} 
Trudy Tbilis. Mat. Inst. {\bf  9} (1941), 97--206

\bibitem{HS} Herzog, J. . {\em A survey on Stanley depth.} In Monomial ideals, computations and applications (2013). Springer, Berlin, Heidelberg  3-45.

\bibitem{Hir}
Hironaka, H. 
{\em Idealistic exponents of singularity} 
In: {\em Algebraic Geometry, The Johns Hopkins 	Centennial Lectures} (1977) 
52-125

\bibitem{J1}  Janet M., \emph{Sur les syst\`{e}mes
d'\'{e}quations aux d\'{e}riv\'{e}es partelles},
J. Math. Pure et Appl., $3$, $(1920)$, $65$-$151$.
\bibitem{J2} Janet M., \emph{Les modules de formes
alg\'{e}briques et la th\'{e}orie g\'{e}n\'{e}rale
des systemes diff\'{e}rentiels}, Annales
scientifiques de l'\'{E}cole Normale
Sup\'{e}rieure, 1924.
\bibitem{J3}  Janet M., \emph{Les  syst\`{e}mes
d'\'{e}quations aux d\'{e}riv\'{e}es partelles},
Gauthier-Villars, 1927.
\bibitem{J4} Janet M., \emph{Lecons sur les
syst\`{e}mes d'\'{e}quations aux d\'{e}riv\'{e}es
partelles }, Gauthier-Villars,1929.


  \bibitem{SPES}
 Mora T.,  {\em Solving Polynomial Equation Systems} 4 Vols., Cambridge University
Press, I (2003), II (2005), III (2015), IV (2016).

\bibitem{Mum}
Mumford D., {\em Lectures on Curves on an Algebraic Surface} (1966) Princeton Univ. Press

\bibitem{Pom}
Pommaret J. F.,
{\em Systems of partial differential
equations and Lie pseudogroups},
Gordon and Brach (1978)
   \bibitem{PomAk}
Pommaret J. F., Akli H.
{\em Effective Methods for Systems of Algebraic Partial Differential Equations},
Progress in Mathematics {\bf 94} (1990),
411--426, Birkh\"auser  
\bibitem{Riq}
Riquier C.,
{\em Les syst\`emes d'\'equations aux d\'eriv\'ees  partielles}
(1910), Gauthiers-Villars.
 

\bibitem{Rob1}
Robinson, L.B.
{\em Sur les  syst\'emes d'\'equations aux d\'eriv\'ees partialles}
C.R. Acad. Sci. Paris {\bf 157} (1913),
106--108
 
\bibitem{Rob2}
Robinson, L.B.
{\em A new canonical form for systems of partial differential equations}
American Journal of Math. {\bf 39} (1917),
95--112

\bibitem{SeiB} Seiler, W.M., \emph{Involution: The formal theory of differential equations and its applications in computer algebra},
 Vol.24, 2009, Springer Science \& Business Media
\end{thebibliography}
\end{document}